%Regular contact manifolds
\documentclass[a4paper,12pt]{article}
\usepackage{amsmath}
\usepackage{amsthm}
\usepackage{amssymb}
\usepackage{amscd}
\usepackage{graphicx}
\usepackage{epsfig}
\usepackage[matrix,arrow,curve]{xy}
\usepackage{color}
\usepackage{mathrsfs}
\usepackage[scr=rsfs,cal=boondox]{mathalpha}

\usepackage{bbm}
\usepackage{stmaryrd}
\usepackage{hyperref}
\usepackage{multirow}

\usepackage{latexsym}
\usepackage{amsfonts}
\input xy
%%%%%%%%%%% tikz library
\usepackage{tikz}
\usetikzlibrary{matrix}
\usetikzlibrary{decorations.pathreplacing,angles,quotes}
%\newcommand{\tikznode}[2]{%
%\ifmmode%
%\tikz[remember picture,baseline=(#1.base),inner sep=0pt] \node (#1) {$#2$};%
%\else
%\tikz[remember picture,baseline=(#1.base),inner sep=0pt] \node (#1) {#2};%
%\fi}
%\usetikzlibrary{arrows,chains,matrix,positioning,scopes,snakes}
%\makeatletter
%\tikzset{join/.code=\tikzset{after node path={%
%\ifx\tikzchainprevious\pgfutil@empty\else(\tikzchainprevious)%
%edge[every join]#1(\tikzchaincurrent)\fi}}}
%$\makeatother

%\tikzset{>=stealth',every on chain/.append style={join},
 %        every join/.style={->}}
%\tikzset{
    %Define standard arrow tip
 %   >=stealth',
    %Define style for boxes
%    punkt/.style={
%           rectangle,
 %          rounded corners,
  %         draw=black, very thick,
   %        text width=6.5em,
    %       minimum height=2em,
     %      text centered},
    % Define arrow style
%    pil/.style={
 %          ->,
  %         thick,
   %        shorten <=2pt,
    %       shorten >=2pt,}
%}
%\renewcommand{\baselinestretch}{1.2}
%%%%%%%%%%%%%%%%%%%%%%

%\xyoption{all} \tolerance=500
%\textwidth15.6cm \textheight23cm
%\hoffset-1cm
%\voffset-1.6cm

%\widowpenalty=10000 \clubpenalty=10000 \raggedbottom\oddsidemargin=.1in \evensidemargin.1in \marginparwidth=1in
%\textwidth15.6cm \topmargin 15mm\textheight23cm
%\advance\topmargin by -\headheight\advance\topmargin by -\headsep

%%%%%%%%%%%%%%%%%%%%%%%%%%%%%%%%%%%%%%%%%%%%%%%%%%%%%%%%%%%%%%%
%%%%%%%%%%%%%%%% DEFINICJE

%%%%%%%%%%%%%%%%%%%%%%%%%%%%%%%%%%55
%\newtheorem{thm}{Theorem}[section]
\newtheorem{theorem}{Theorem}[section]
\newtheorem{proposition}[theorem]{Proposition}
\newtheorem{lemma}[theorem]{Lemma}

\theoremstyle{definition}
\newtheorem{example}[theorem]{Example}
\newtheorem{remark}[theorem]{Remark}
\newtheorem{definition}[theorem]{Definition}

%%%%%%%%%%%%%%%%%%%%%%%%%%%%

%%%%%%%%%%%%%%%%%%%%%%%%
%%%%    private macros
%%%%    do not use renewcommand (i.e. do not redefine
%%%%%   standard  Latex-Commands
%%%%%%%%%%%%%%%%%%%%%%%%%%%%%%%%%%%%%%%%%%%%%%%%%%%%%%%
%\font\sm=\msbm6
%%%%%%%%%%%%%%%%%%%%%%%%%%%%%%%%%%%%%%%%%%%%%%%%%%%%%%%%%%%%
%\textwidth15.6cm \textheight24cm \hoffset-1.6cm \voffset-2.5cm

% NPNPNPNP
%\font\frak=eufm10 scaled\magstep1 \font\fak=eufm10 scaled\magstep2 \font\fk=eufm10 scaled\magstep3
%\scriptfrak=eufm10 \font\tenfrak=eufm10
% NPNPNPNP

%\newtheorem{theorem}{Theorem}
%\newtheorem{corollary}{Corollary}
%\newtheorem{proposition}{Proposition}
%\newtheorem{definition}{Definition}
%\newtheorem{lemma}{Lemma}
%\font\frak=eufm10 scaled\magstep1
%%%%%%%%%%%%%%%%%%%%%%%%%%%%%%%%%%%%%%%%%%%%%%%%%%%%
%\newenvironment{pf}{{\noindent{\it Proof. }}}{\ \rule{2mm}{2.5mm}\medskip}
%\newenvironment{pft}{{\noindent{\it Proof of Theorem }}}{\ \rule{2mm}
%{2.5mm}\medskip}
%%%%%%%%%%%%%%%%%%%%%%%%%%%%%%%%%%%%%%%%%%%%%%%%%%%%%%%%%%%%%%%%%
%\usepackage{newlfont}

\mathchardef\za="710B  %\alpha
\mathchardef\zb="710C  %\beta
\mathchardef\zg="710D  %\gamma
\mathchardef\zd="710E  %\delta
\mathchardef\zve="710F %\epsilon
\mathchardef\zz="7110  %\zeta
\mathchardef\zh="7111  %\eta
\mathchardef\zvy="7112 %\theta
\mathchardef\zi="7113  %\iota
\mathchardef\zk="7114  %\kappa
\mathchardef\zl="7115  %\lambda
\mathchardef\zm="7116  %\mu
\mathchardef\zn="7117  %\nu
\mathchardef\zx="7118  %\xi
\mathchardef\zp="7119  %\pi
\mathchardef\zr="711A  %\rho
\mathchardef\zs="711B  %\sigma
\mathchardef\zt="711C  %\tau
\mathchardef\zu="711D  %\upsilon
\mathchardef\zvf="711E %\phi
\mathchardef\zq="711F  %\chi
\mathchardef\zc="7120  %\psi
\mathchardef\zw="7121  %\omega
\mathchardef\ze="7122  %\varepsilon
\mathchardef\zy="7123  %\vartheta
\mathchardef\zf="7124  %\varomega
\mathchardef\zvr="7125 %\varrho
\mathchardef\zvs="7126 %\varsigma
\mathchardef\zf="7127  %\varphi
\mathchardef\zG="7000  %\Gamma
\mathchardef\zD="7001  %\Delta
\mathchardef\zY="7002  %\Theta
\mathchardef\zL="7003  %\Lambda
\mathchardef\zX="7004  %\Xi
\mathchardef\zP="7005  %\Pi
\mathchardef\zS="7006  %\Sigma
\mathchardef\zU="7007  %\Upsilon
\mathchardef\zF="7008  %\Phi
\mathchardef\zW="700A  %\Omega

\newcommand{\be}{\begin{equation}}
\newcommand{\ee}{\end{equation}}

\newcommand{\bea}{\begin{eqnarray}}
\newcommand{\eea}{\end{eqnarray}}
\newcommand{\beas}{\begin{eqnarray*}}
\newcommand{\eeas}{\end{eqnarray*}}
\def\*{{\textstyle *}}
\newcommand{\T}{{\mathbb T}}

\newcommand{\SU}{SU(2)}

\newcommand{\we}{\wedge}

\newcommand{\ot}{\otimes}

%%%%%%%%%%%%%%%%%%%%%%%%%%%%%%%%%%%%%%%%%%%%%%%%%%%%%%%%%%%%%%%%%%
%%%%%%%%%%%%%%%% MJ %%%%%%%%% Moje definicje.

%\newcommand{\m}{\mathfrak{m}}

 %napis nad strza�k� w prawo

%\newcommand{\A}{\mathfrak{w}} %na oznaczenie symbolu

%\newcommand{\ope}[1]{\!\!\mathop{\rm ~#1}\nolimits}

%%%%%%%%%%%%%%%%%%%%

%\newcommand{\p}{\partial}
\newcommand{\La}{\big\langle}
\newcommand{\Ra}{\big\rangle}

\newcommand{\N}{\mathbb{N}}
\newcommand{\Z}{\mathbb{Z}}
\newcommand{\R}{\mathbb{R}}

\newcommand{\C}{\mathbb{C}}

%\newcommand{\D}{\mathbb{D}}
%\newcommand{\G}{\mathbb{G}}

%\newcommand{\A}{\mathbb{A}}

%%%%%%%%%%%%%%%%%%%%%%%%%%%%%%%%%%%%%%%%%
\newcommand{\pa}{\partial}
\newcommand{\ti}{\times}

\newcommand{\cG}{{\mathcal G}}

\newcommand{\Ll}{{\pounds}}

\def\ran{\rangle}

\def\tU{{\widetilde{U}}}
\def\tM{\widetilde{M}}

\def\tR{{{\cR_\tU}}}
\def\th{{\zh_\tU}}

\def\bt{{\boxtimes}}

\def\op{\oplus}

\def\cL{{\mathcal L}}

\def\cR{{\mathcal R}}

\def\cO{{\mathcal O}}

\def\cU{{\mathcal U}}

\def\wh{\widehat}

\def\ol{\overline}
\def\ul{\underline}
\def\Sec{\operatorname{Sec}}

\def\la{\langle}
\def\ran{\rangle}
%%%%%%%%%%%%%%%%%%%

\def\bh{{\mathbf h}}

\def\sD{{\mathsf D}}

\def\sT{{\mathsf T}}

\def\sU{{\mathsf U}}

\def\sh{\mathsf{h}}

\def\xd{\mathrm{d}}
\def\xi{\mathrm{i}}

\def\cF{{\mathcal F}}

\newdir{|>}{%
!/4.5pt/@{|}*:(1,-.2)@^{>}*:(1,+.2)@_{>}}

\def\Graph{\operatorname{graph}}

\def\L{\mathbb{L}}

\def\SU{\operatorname{SU}}

\def\P{\mathbf{P}}

%%%%%%%%%%%%%%%%%

%\newdir{|>}{%
%!/4.5pt/@{|}*:(1,-.2)@^{>}*:(1,+.2)@_{>}}
\newdir{ (}{{}*!/-5pt/@^{(}}
%%%%%%%%%%%%%%%%%%%%%%%%%%%%%%%%%%%%%%%%%%
%\newtheorem{theorem}{Theorem}

\newcommand{\id}{\mathrm{id}}

%%%%%%%%%%%

%%%%%%%%%%%%%%%%%%%%%%%%%%%%%%%%%%%%

%\newcommand{\bk}[2]{\ensuremath{\langle #1 | #2 \rangle}}

%%%%%%%%%%%%%%%%%%%%%%%%%%%%%%%%%%%%%%%%%%%%%%%%%%%%%
%\newtheorem{tw}{Theorem}
%\newtheorem{lemma}{Lemma}

\def\N{\mathbb{N}}

\def\n{\nabla}

\newcommand{\m}{{\medskip}}
\newcommand{\mn}{{\medskip\noindent}}

\newcommand{\no}{{\noindent}}
\def\Rt{{\R^\ti}}

\newcommand{\lobar}[1]{\bar{\smash{#1}\vphantom{x}}\vphantom{#1}}
\newcommand{\ubar}[1]{\ul{\smash{#1}\vphantom{x}}\vphantom{#1}}
\newcommand{\bti}{\lobar{\times}}
\newcommand{\uti}{\ubar{\times}}

\def\Graph{\operatorname{graph}}
%%%%%%%%%%%%%%%%%%%%%%%%%%%%%%%%%%%%%%%%%%%%%%%%
%%%%%%%%%%%%%%%% MJ %%%%%%%%% Moje definicje.
%\def\mathscr{T}{{\mathcal T}}
%%%%%%%%%%%%%%%%%%%%%%%%%%%%%%%%%%%%%%%%%%
\tolerance=500 \textwidth16.8cm \textheight26cm \hoffset-1.3cm \voffset-3cm

\begin{document}
\title{The regularity and products in contact geometry\footnote{This research was partially funded by the National Science Centre (Poland) within the project WEAVE-UNISONO, No. 2023/05/Y/ST1/00043.}}
%{Regular complete contact manifolds and their products}%:\\ a generalization of the Boothby-Wang theorem}
\author{Katarzyna Grabowska\footnote{email:konieczn@fuw.edu.pl }\\
\textit{Faculty of Physics,
                University of Warsaw}
\\ \\
Janusz Grabowski\footnote{email: jagrab@impan.pl} \\
\textit{Institute of Mathematics, Polish Academy of Sciences}}
\date{}
\maketitle
\begin{abstract} We study regular contact manifolds $(M,\zh)$ whose Reeb vector field is complete and prove that they are canonically principal bundles with the structure group $S^1$ or $\R$. For compact $M$, our proof is very short and elementary and covers the celebrated Boothby-Wang theorem, but we do not assume compactness from the very beginning. However, to prove our result in full generality we use some topological tools adapted to smooth fibrations. In the second part of the paper, we describe a natural concept of contact products of general contact manifolds as well as a product of principal contact manifolds, which exists if the periods of the Reeb vector fields are commensurate and corresponds to the construction of products of prequantization bundles of symplectic manifolds.

\medskip\noindent
{\bf Keywords:}
\emph{contact form; Reeb vector field; smooth fibration; fiber bundle; principal bundle; symplectic form; integrality condition; prequantization.}\par

\smallskip\noindent
{\bf MSC 2020:} 53D10; 53D35; 37C10; 37C86.	

\end{abstract}
\section{Introduction}
The main object of our studies in is the structure of regular contact manifolds. More precisely, let $\zh$ be a contact form on a manifold $M$, which will be assumed to be connected throughout this paper. We say that the contact manifold $(M,\zh)$ is \emph{regular} if the foliation $\cF_\cR$ of $M$ by orbits of the Reeb vector field $\cR$ is simple, i.e., the space $N=M/\cF_\cR$ of orbits (denoted also with $M/\cR$) has a manifold structure such that the canonical projection $p:M\to N$ is a smooth fibration. Here, by orbits of $\cR$ (which is a nonvanishing vector field on $M$) we understand the 1-dimensional submanifolds of $M$, being the images of trajectories of $\cR$.

\m The study on the structure of compact regular contact manifolds $(M,\zh)$ was originated by Boothby and Wang \cite{Boothby:1958} and then continued by many authors, \cite{Boyer:2008,Boyer:2007,Geiges:2008,Kegel:2021,Niederkruger:2005,Wadsley:1975} to list a few of them. The original result in \cite{Boothby:1958} says that on a compact regular contact manifold, there exists an equivalent contact form $\zh'$ whose Reeb vector field $\cR'$ is periodic, thus inducing a principal action of the group $S^1=\sU(1)$ on $M$. However, the proof in \cite{Boothby:1958} is incomplete in one important respect. This fact was noticed and corrected (cf. \cite{Geiges:2008,Niederkruger:2005}). Moreover, a relation to Riemannian geometry was discovered in \cite{Sullivan:1978,Wadsley:1975} as well as a generalization for Besse contact manifolds, which led to an orbifold structure on the space $M/\cR$.

In the regular case, the orbits of the Reeb vector field are closed submanifolds, so for compact $M$ they all are circles and $\cR$ is periodic on each orbit, however, \emph{a priori} with different periods.
We show, without the compactness assumption, that all minimal periods are the same and $\cR$ is automatically the fundamental vector field of a free $\sU(1)$-action, so we do not need to seek for a rescaling of $\cR$ as it is done in \cite{Boothby:1958}. The Boothby-Wang theorem for non-compact contact manifolds is, in principle, known; for instance, it was derived \cite{Kegel:2021} from results \cite{Wadsley:1975} on Riemannian metrics on $M$ such that the orbits of $\cR$ are geodesics. However, the power of our proof is that it is short and elementary, while a related proof in \cite{Wadsley:1975} takes several pages.

What is more, our proof generalizes to the situation when the Reeb vector field is complete, which is automatically satisfied in the compact case. It is clear that \emph{a priori} $\cR$ may have both, compact and non-compact orbits, as for instance the Reeb vector field on a punctured sphere $S^{2n+1}$. We show, however, that for complete $\cR$ we have only two possibilities: either $\cR$ has no periodic orbits, or all orbits are closed and share the same minimal period. To this end we use some topological results concerning smooth fibrations \cite{Meigniez:2002}. Our main result is the following.
\begin{theorem}\label{main}
If the Reeb vector field $\cR$ on a regular contact manifold $(M,\zh)$ is complete, then it induces on $M$ either an $\R$- or an $\sU(1)$-principal action, so $p:M\to N=M/\cR$ is a principal bundle for which $\zh$ represents a principal connection whose curvature $\zw$ is a symplectic form on $N$, $p^*(\zw)=\xd\zh$. If $\zr\in(0,+\infty]$ is the minimal period of the flow $\exp(t\cR)$, then the symplectic form $\zw$ satisfies the integrality condition $[\zw/\zr]\in H^2(N,\Z)$.
\end{theorem}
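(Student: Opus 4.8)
The plan is to realise the group action directly from the flow of $\cR$ and then read off the bundle, the connection and the cohomological condition. Since $\iota_\cR\zh=1$ and $\iota_\cR\xd\zh=0$, Cartan's formula gives $\Ll_\cR\zh=0$, and completeness makes the flow $\Phi_t=\exp(t\cR)$ a smooth $\R$-action on $M$ with $\Phi_t^*\zh=\zh$ for all $t$; its orbits are precisely the fibres of $p$. Each fibre, being a connected $1$-manifold, is diffeomorphic to $S^1$ or to $\R$, and since $p$ is a fibration over the connected base $N$ all fibres have the same homotopy type, so either every orbit is periodic or none is. For compact $M$ this is immediate (a closed connected $1$-submanifold of a compact manifold is a circle, and $\cR$ is then automatically complete); in the non-compact case this dichotomy, together with the local triviality used below, is exactly where the cited topological results on smooth fibrations enter.

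Next I would prove that in the periodic case all minimal periods coincide. Working in a local trivialisation $p^{-1}(U)\cong U\times S^1$ with fibre coordinate $\theta$, write $\zh=a\,\xd\theta+\sum_j b_j\,\xd u^j$; from $\iota_\cR\xd\zh=0$ the field $\cR$ is a multiple of $\partial_\theta$, and $\zh(\cR)=1$ forces $\cR=a^{-1}\partial_\theta$ with $a>0$, whence the minimal period over $u\in U$ equals $\zr(u)=\int_0^{2\pi}a(u,\theta)\,\xd\theta$. On the other hand $\xd\zh$ is basic (horizontal and invariant), so its $\xd u^j\wedge\xd\theta$-component vanishes, i.e.\ $\partial_{u^j}a=\partial_\theta b_j$; hence $\partial_{u^j}\zr(u)=\int_0^{2\pi}\partial_\theta b_j\,\xd\theta=0$ by periodicity of $b_j$ in $\theta$. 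Thus $\zr$ is locally constant, hence constant, say $\zr\equiv\zr_0\in(0,+\infty)$, so $\Phi_{\zr_0}=\id_M$ and the $\R$-action factors through a \emph{free} action of $S^1=\R/\zr_0\Z$; in the non-periodic case the $\R$-action is itself free.

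Then, choosing a local section $\zs\colon U\to M$ of the submersion $p$, the map $(u,t)\mapsto\Phi_t(\zs(u))$ is, using freeness, completeness and equivariance of the flow, a diffeomorphism of $U\times\R$ (resp.\ $U\times S^1$) onto $p^{-1}(U)$ carrying the action to fibrewise translation, so $p\colon M\to N$ is a principal $\R$- (resp.\ $S^1$-) bundle. The fundamental vector field of $1\in\Lie(G)\cong\R$ is $\cR$, so $\zh(\cR)=1$ together with $\Phi_t^*\zh=\zh$ (and triviality of $\Ad$ for abelian $G$) exhibits $\zh$ as a principal connection. Its curvature is $\xd\zh$ (no bracket term), and being horizontal and invariant it descends to a unique $\zw\in\Omega^2(N)$ with $p^*\zw=\xd\zh$. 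Injectivity of $p^*$ gives $\xd\zw=0$, and from the contact condition $\zh\wedge(\xd\zh)^n\neq0$ together with $(\xd\zh)^n=p^*(\zw^n)$ one gets $\zw^n\neq0$ everywhere, so $\zw$ is symplectic.

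Finally, the integrality: if $\zr=+\infty$ there is nothing to prove, while if $\zr=\zr_0<\infty$, rescaling the structure group $\R/\zr_0\Z$ to $\R/\Z$ replaces $\zh$ by the connection $\zh/\zr_0$, whose reduced curvature is $\zw/\zr_0$; the standard Chern--Weil argument (trivialise over the two hemispheres of a $2$-sphere, apply Stokes, and read the winding number of the transition function) shows that $[\zw/\zr_0]$ is, up to sign, the image in de Rham cohomology of $c_1(M)\in H^2(N,\Z)$, which is the asserted $[\zw/\zr]\in H^2(N,\Z)$. The \emph{main obstacle} lies in the non-compact case of the first and third steps: completeness together with the local contact model gives only that $p$ is a surjective submersion all of whose fibres are $\cong\R$, and one must still promote this to a locally trivial principal $\R$-bundle (and rule out fibres of mixed topology); this is trivial for compact $M$ but in general needs the topological input on smooth fibrations, whereas the period computation, the descent of $\xd\zh$, non-degeneracy and the Chern--Weil normalisation are routine.
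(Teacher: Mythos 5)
Your overall architecture (local computation showing the minimal period is locally constant, descent of $\xd\zh$ to a symplectic form, integrality via the first Chern class) matches the paper's, but there is one genuine gap at the crux of the theorem: your justification of the dichotomy ``either every orbit is periodic or none is.'' You derive it from the claim that, since $p$ is a fibration over a connected base, all fibres have the same homotopy type. That is true for locally trivial fibre bundles (and for Serre fibrations), but a \emph{smooth fibration} in the sense used here is only a surjective submersion onto the orbit space, and such maps need not have homotopy-equivalent fibres. The paper's own counterexample makes this concrete: remove one point from a single Reeb orbit of $S^{2n-1}\to\C\P^{n-1}$; the result is still a regular contact manifold, $p$ is still a surjective submersion, yet one fibre is $\R$ while all others are circles. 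So the dichotomy is not a soft topological consequence of connectedness of $N$; it genuinely requires completeness of $\cR$, and your argument never uses completeness at this point. (Ehresmann's theorem is also unavailable without properness of $p$, and the Meigniez results you defer to give local triviality near compact fibres and for fibrations with all fibres $\R$ --- neither gives homotopy equivalence of fibres of a general submersion, so the deferral does not close the gap.)

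The paper closes this gap by an open-and-closed argument on $N$ that uses completeness essentially. Openness of the set of base points with compact fibre of minimal period $\zr$ follows from Meigniez's local stability lemma (a compact fibre has a product neighbourhood, so $p$ is a genuine $S^1$-bundle there) combined with your own local computation that the period is locally constant. Closedness is where completeness enters: if $x_0$ is in the closure of this set and $y_0\in p^{-1}(x_0)$, openness of the submersion $p$ produces $y_n\to y_0$ with $\zf_\zr(y_n)=y_n$, and since the time-$\zr$ flow $\zf_\zr=\exp(\zr\cR)$ is globally defined and continuous, $\zf_\zr(y_0)=\lim\zf_\zr(y_n)=y_0$, so the orbit through $y_0$ is also $\zr$-periodic. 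Connectedness of $N$ then yields the dichotomy. You should replace your homotopy-type argument with something of this kind; the remaining steps of your proposal (the period computation, the descent of $\xd\zh$, nondegeneracy of $\zw$, and the Chern--Weil normalisation of $[\zw/\zr]$, which the paper instead does by a \v Cech cocycle argument) are sound.
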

\no  Note also that if $\cR$ induces a non-free $\sU(1)\simeq S^1$-action, then the quotient manifold $M/\sU(1)$ is generally only an orbifold \cite{Kegel:2021}.

\mn In the second part of the paper, we study the question: what are canonical `contact products' of contact manifolds? Of course, the answer cannot be completely trivial, as the Cartesian product of contact manifolds is even-dimensional, so never contact. The solution comes from the contact reduction of the Cartesian products of symplectizations. It goes back to the observation \cite{Bruce:2017,Grabowski:2013} that there exists a one-to-one correspondence between general contact manifolds  $(M,C)$, where $C$ is a contact distribution, and so-called \emph{symplectic $\Rt$-bundles}, i.e., principal $\Rt$-bundles $\zt:P\to M$ equipped with a 1-homogeneous symplectic form $\zw$. Here, $\Rt$ is the multiplicative group of invertible reals. Using $\Rt$ instead of the additive $\R$ is necessary for including non-coorientable contact structures in the picture. As the Cartesian products of symplectic $\Rt$-bundles are canonically symplectic $\Rt$-bundles with respect to the diagonal action of $\Rt$, we get the required product \emph{via} the mentioned correspondence. Note, however, that this concept is canonical only in the distributional setting of contact geometry, as contact products of cooriented contact manifolds are only coorientable, there is no canonical trivialization.

\mn We consider as well another concept of a `contact product', namely, we study the question: what are canonical products of prequantizations? On the level of contactifications of symplectic manifolds, the problem can be translated to seeking canonical products of contactifications. We present a solution to this problem under an additional requirement that if both are finite, the minimal periods $\zr_1,\zr_2$ of the corresponding Reeb vector fields are commensurate, $\zr_2/\zr_1=k/l$, $k,l\in\N$. If $k,l$ are relatively prime, the minimal period $\zr$ of the Reeb vector field of the product is $\zr=\zr_2/k=\zr_1/l$.

\mn The paper is organized as follows. In Section 2 we describe symplectizations of general (possibly non-trivializable) contact structures understood as \emph{symplectic $\Rt$-bundles}. Section 3 is devoted to introducing \emph{contactifications} of symplectic manifolds and related reductions: symplectic-to-contact and contact-to-symplectic. Regular contact manifolds with all Reeb orbits compact are studied in Section 4; we prove a generalization of the Boothby-Wang theorem. In Section 5, in turn, we study the general case of complete Reeb vector fields and prove our main result. Products of contact manifolds and contact relations are defined in Section 6, while in Section 7 we introduce products of principal contact manifolds and prequantization spaces. We end up with a section containing conclusions and an outlook.

\section{Contact structures: symplectizations}
\subsection{Basics}
Generally, a \emph{contact structure} on a manifold $M$ of dimension $(2n+1)$ is a \emph{maximally nonintegrable} distribution $C\subset \sT M$, being a \emph{field of hyperplanes} on $M$, i.e., a distribution with rank $2n$. Such a distribution is, at least locally, the kernel of a nonvanishing 1-form $\zh$ on $M$, i.e., $C=\ker(\zh)$, and the maximal nonintegrability means that the 2-form $\xd\zh$ is nondegenerate on $C$. Such a (local) 1-form we call a \emph{contact form}. It is alternatively characterized by the condition that $\zh\we(\xd\zh)^n$ is nonvanishing, i.e., it is a volume form. Of course, the 1-form $\zh$ is determined only up to conformal equivalence. i.e., a 1-form $\zh$ is a contact form if and only if $f\zh$ is a contact 1-form for any nonvanishing function $f$. The contact form $f\zh$ we call \emph{equivalent} to $\zh$. It defines the same contact distribution $C=\ker(\zh)$. The local picture for contact forms is fully described in the following.
\begin{theorem}[Contact Darboux Theorem] Let $\zh$ be 1-form on a manifold $M$ of dimension $(2n+1)$. Then $\zh$ is a contact form if and only if around every point of $M$ there are local coordinates $(z,p_i,q^i)$, $i=1,\dots,n$, in which $\zh$ reads
\be\label{Dc} \zh=\xd z-p_i\,\xd q^i.\ee
\end{theorem}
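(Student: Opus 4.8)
I would first dispatch the forward implication by a direct computation: if $\zh=\xd z-p_i\,\xd q^i$ in some chart, then $\xd\zh=\xd q^i\we\xd p_i$, so $(\xd\zh)^n=n!\,\xd q^1\we\xd p_1\we\cdots\we\xd q^n\we\xd p_n$ and hence $\zh\we(\xd\zh)^n=n!\,\xd z\we\xd q^1\we\xd p_1\we\cdots\we\xd q^n\we\xd p_n$ is a volume form, so $\zh$ is a contact form. The substance of the theorem is the converse, which I would prove by reducing the contact normal form to the symplectic Darboux theorem on a transversal slice, using the Reeb field to organise the construction.

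So, assume $\zh$ is contact and fix $x_0\in M$. Let $\cR$ be the Reeb field of $\zh$, characterised by $\iota_{\cR}\zh=1$ and $\iota_{\cR}\xd\zh=0$; since $\xd\zh$ has constant rank $2n$ with kernel exactly $\R\cR$, the field $\cR$ is nonvanishing. I would choose a $2n$-dimensional submanifold $\zS\ni x_0$ transverse to $\cR$. Because $\ker\xd\zh=\R\cR$ is transverse to $T\zS$, the pullback $\zw:=\xd\zh|_\zS$ is a nondegenerate closed $2$-form, i.e. a symplectic form on $\zS$, and $\zb:=\zh|_\zS$ is a primitive of it, $\xd\zb=\zw$. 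By the symplectic Darboux theorem there are coordinates $(q^i,p_i)$ on a neighbourhood of $x_0$ in $\zS$, centred at $x_0$, with $\zw=\xd q^i\we\xd p_i$. As $-p_i\,\xd q^i$ is another primitive of the same $\zw$, the closed $1$-form $\zb+p_i\,\xd q^i$ is exact, so $\zb=-p_i\,\xd q^i+\xd f$ for some $f$ on $\zS$.

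I would then propagate these data off $\zS$ along the Reeb flow. The map $(s,x)\mapsto\exp(s\cR)(x)$, $x\in\zS$, is a diffeomorphism from a neighbourhood of $(0,x_0)$ onto a neighbourhood $U$ of $x_0$; setting $z:=s$ and extending $q^i,p_i$ to be constant along $\cR$ gives coordinates $(z,q^i,p_i)$ on $U$ with $\cR=\pa_z$ and $\zS=\{z=0\}$. Since $\Li_{\cR}\zh=\iota_{\cR}\xd\zh+\xd(\iota_{\cR}\zh)=0$, the form $\zh$ is invariant along $\cR$, and as $\zh(\pa_z)=1$ I may write $\zh=\xd z+\zg$, where $\zg:=\zh-\xd z$ has no $\xd z$-component and, being likewise $\pa_z$-invariant, has coefficients independent of $z$. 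Restricting to $\zS$ gives $\zg|_{\zS}=\zb=-p_i\,\xd q^i+\xd f$; since both sides are $z$-independent and involve only $\xd q^i,\xd p_i$, this identity holds throughout $U$, whence $\zh=\xd z-p_i\,\xd q^i+\xd f$ with $f=f(q,p)$. Finally the change of coordinates $z\mapsto z+f(q,p)$ absorbs $\xd f$ and yields $\zh=\xd z-p_i\,\xd q^i$, as claimed.

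The only genuinely nontrivial input is the symplectic Darboux theorem on the transversal $\zS$; everything contact-specific is bookkeeping organised by the Reeb field. The point requiring the most care, which I would expect to be the main obstacle, is matching the primitive $\zh|_\zS$ with the model primitive $-p_i\,\xd q^i$ up to an exact term $\xd f$, together with the concluding observation that, because $f$ does not depend on $z$, the shift $z\mapsto z+f$ is an admissible coordinate change that leaves $\cR=\pa_z$ unchanged.
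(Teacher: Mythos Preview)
The paper does not supply its own proof of the Contact Darboux Theorem; it is stated in Section~2.1 as a standard background result, with no argument given. So there is nothing to compare your proposal against on the paper's side.

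That said, your proof is correct and follows one of the standard routes: reduce to the symplectic Darboux theorem on a slice $\zS$ transverse to the Reeb field, then use the Reeb flow and the invariance $\Ll_{\cR}\zh=0$ to extend the coordinates. The bookkeeping is sound: the key points are that $\zg=\zh-\xd z$ has no $\xd z$-component and is $\pa_z$-invariant, hence is determined by its restriction to $\zS$, and that the leftover exact term $\xd f$ with $f=f(q,p)$ can be absorbed into $z$ without disturbing $\cR=\pa_z$. One minor remark: when you say the closed $1$-form $\zb+p_i\,\xd q^i$ is exact, you are using that $\zS$ can be taken contractible (e.g.\ a coordinate ball), which is of course fine locally but worth making explicit.
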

\no In this paper, we will consider mainly \emph{trivial} (\emph{cooriented}) contact structures, i.e., manifolds equipped with a globally defined contact form. A contact manifold $(M,C)$ is \emph{trivializable}, i.e., it admits a global contact form $\zh$ with $C=\ker(\zh)$ if and only if the line bundle $L=\sT M/C$ is trivializable.

\no Any contact form $\zh$ on $M$ determines uniquely a vector field $\cR$ on $M$, called the \emph{Reeb vector field}, which is uniquely characterized by the equations
$$i_\cR\zh=1\quad \text{and}\quad i_\cR\xd\zh=0.$$
The Reeb vector field for the contact form (\ref{Dc}) is $\cR=\pa_z$. The Reeb vector field is nonvanishing and respects the contact form, i.e., $\Ll_\cR\zh=0$, where $\Ll$ denotes the Lie derivative.

\subsection{Symplectizations: symplectic $\Rt$-bundles}
For a nonvanishing 1-form $\zh$ on a manifold $M$, we have the following well-known fundamental observation.
\begin{proposition} The 1-form $\zh$ is contact if and only if the 2-form $\zw$ on $\tM=M\ti\R_+$,
\be\label{sy}\zw(x,s)=\xd(s\cdot\zh)(x,s)=\xd s\we\zh(x)+s\cdot\xd\zh(x),
\ee
is a symplectic form. Here, $\R_+$ denotes the multiplicative group of positive reals, and $s\in\R_+$ is the standard coordinates inherited from $\R$.
\end{proposition}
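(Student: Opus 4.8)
The plan is to check the two defining properties of a symplectic form for $\zw$. Closedness is automatic: by construction $\zw=\xd(s\cdot\zh)$ is an exact 2-form on $\tM=M\ti\R_+$ — here $s\cdot\zh$ is a genuine 1-form on $\tM$, with $s$ the coordinate on the $\R_+$ factor and $\zh$ pulled back from $M$ — so $\xd\zw=0$. Thus the whole content of the proposition is the equivalence between pointwise nondegeneracy of $\zw$ at each $(x,s)$ and the contact condition for $\zh$ at $x$.

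For this I would invoke the standard linear-algebra fact that a 2-form on a manifold of dimension $2m$ is nondegenerate at a point precisely when its $m$-th exterior power is nonzero there. Since $\dim\tM=2n+2$, I compute $\zw^{\,n+1}$. Both summands of $\zw=\xd s\we\zh+s\,\xd\zh$ are 2-forms, hence commute, so the binomial expansion applies; moreover $(\xd s\we\zh)\we(\xd s\we\zh)=0$ because $\xd s\we\xd s=0$, so only the terms containing the factor $\xd s\we\zh$ at most once survive:
\be
\zw^{\,n+1}=s^{n+1}(\xd\zh)^{n+1}+(n+1)\,s^{n}\,\xd s\we\zh\we(\xd\zh)^n.
\ee
The first term vanishes identically, because $(\xd\zh)^{n+1}$ is a $(2n+2)$-form pulled back from the $(2n+1)$-dimensional manifold $M$. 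Hence, up to the nowhere-zero factor $(n+1)s^{n}$ and a sign, $\zw^{\,n+1}=\zh\we(\xd\zh)^n\we\xd s$.

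It remains to note that a top-degree form on $\tM$ of the shape $\zb\we\xd s$ with $\zb$ pulled back from $M$ is a volume form at $(x,s)$ if and only if $\zb$ is a volume form on $M$ at $x$: in a chart where $\xd x^1,\dots,\xd x^{2n+1},\xd s$ is a coframe, the coefficient of $\zb\we\xd s$ relative to $\mathrm{vol}_M\we\xd s$ equals the coefficient of $\zb$ relative to $\mathrm{vol}_M$. Applying this with $\zb=\zh\we(\xd\zh)^n$, we conclude that $\zw^{\,n+1}$ is nowhere zero on $\tM$ if and only if $\zh\we(\xd\zh)^n$ is nowhere zero on $M$, i.e., by the characterization of contact forms recalled above, if and only if $\zh$ is a contact form. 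This proves both implications simultaneously. No step presents a genuine obstacle; the only points demanding a little attention are the reduction from ``$\zw^{\,n+1}\neq 0$'' to nondegeneracy of $\zw$ and the identification of which terms in the expansion survive — the rest is routine exterior-algebra bookkeeping.
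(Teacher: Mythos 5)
Your proof is correct. The paper itself offers no proof of this proposition --- it is stated as a ``well-known fundamental observation'' --- and your argument is the standard one: closedness is immediate from exactness, and the computation $\zw^{n+1}=\pm(n+1)s^n\,\zh\we(\xd\zh)^n\we\xd s$, together with the vanishing of $(\xd\zh)^{n+1}$ as a pullback of a top-degree-exceeding form from $M$, correctly reduces nondegeneracy of $\zw$ on $\tM$ to the volume-form condition for $\zh\we(\xd\zh)^n$ on $M$. No gaps.
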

Note that the manifold $\tM$ can be viewed as a trivial principal $\R_+$-bundle $\zt:\tM\to M$, with the $\R_+$-action $h_\zn(x,s)=(x,\zn s)$, and the 2-form $\zw$ is \emph{homogeneous} (more precisely, \emph{$1$-homogeneous}), i.e., $h_\zn^*(\zw)=\zn\cdot\zw$. Moreover, there is a one-to-one correspondence between contact forms equivalent to $\zh$ and other trivializations of the principal bundle, given by (\ref{sy}). If we change the trivialization by a factor $f(x)$, then the vertical coordinate changes by $s\mapsto s/f(x)$ and the contact form by $\zh\mapsto f\zh$, so the 1-form $\zvy=s\zh$ does not depend on the trivialization. Also, the generator $\n=s\pa_s$ does not depend on the trivialization. The 1-form $\zvy$ on $\tM$ we call the \emph{Liouville 1-form}, and the vector field $\n$ the \emph{Liouville vector field}. The principal bundle $\tM$, equipped with the homogeneous symplectic form $\zw$ we call the \emph{symplectization} of the contact form $\zh$. Also the contact distribution $C=\ker(\zh)$ does not depend on the trivialization and can be written as the projection of the kernel of $\zvy$, $C=\sT\zt\big(\ker(\zvy)\big)$. On the other hand, the Reeb vector field $\cR$ strongly depends on the trivialization, as the Reeb vector field $\cR'$ of the equivalent contact form $\zh'=f\zh$ is not $\cR/f$ but $\cR'=\cR/f+X$, where $X\in C$ is uniquely determined by the equation
$$i_X\xd\zh=\xd f-\cR(f)\zh.$$
In this paper, we consider $\R_+$-bundles rather than $\R$-bundles. Of course, both pictures are equivalent, but it is more convenient to see $(\R_+,\cdot)\simeq(\R,+)$ as a subgroup in $\Rt$. The standard symplectization is often considered on $\tM=M\ti\R$ rather than $\tM=M\ti\R_+$, so (\ref{sy}) takes the form
\be\label{hsf1}\zw_\zh(x,t)=\xd(e^t\zh)(x,t)=e^t\big(\xd t\we\zh(x)+\xd\zh(x)\big).\ee

\mn The picture presented above is a particular case of a \emph{symplectic $\Rt$-bundle} in the terminology of \cite{Grabowski:2013} (here, $\Rt=\R\setminus\{0\}$ is the multiplicative group of nonzero reals). Such bundles are defined as $\Rt$-principal bundles $\zt:P\to M$, equipped additionally with a 1-homogeneous symplectic form $\zw$. They are symplectizations of general contact manifolds $(M,C)$. For any local trivialization of $P$, the symplectic form $\zw$ reads exactly like in (\ref{sy}) for a local contact form $\zh$ determining $C$, $C=\ker(\zh)$, associated with the local trivialization. The Liouville 1-form is well defined on $P$, as well as the Liouville vector field $\n$, and $C=\sT\zt\Big(\ker(\zvy)\big)$. There is a one-to-one correspondence between isomorphism classes of symplectic $\Rt$-bundles and contact manifolds. A canonical realization is the $\Rt$-bundle $P=(C^o)^\ti\subset\sT^*M$, where $(C^o)^\ti$ is the submanifold of $\sT^*M$ consisting of nonzero vectors of the annihilator of $C$. The homogeneous symplectic form is in this case the restriction of the canonical symplectic form $\zw_M$ on $\sT^*M$ to $P$ (cf. \cite{Arnold:1989}).
The necessity of using the non-connected group $\Rt$ instead of just $\R_+$ comes from the need to include non-cooriented contact structures into the picture. Closer studies of such structures, together with a description of the corresponding contact Hamiltonian mechanics, can be found in a series of papers \cite{Bruce:2017,Grabowska:2022,Grabowska:2023,Grabowska:2024,Grabowska:2024b,Grabowski:2013}.

\section{Contactifications}
It is obvious that any cooriented contact manifold $(M,\zh)$ of dimension $(2n+1)$ is automatically presymplectic with the exact presymplectic form $\xd\zh$ of rank $2n$. In this case the involutive distribution $\ker(\xd\zh)$ is generated by the Reeb vector field $\cR$. In the following, we will use the terminology of \cite{Boothby:1958}.
\begin{definition}
A cooriented contact manifold $(M,\zh)$ we call \emph{regular} if the foliation $\cF$ of $M$ into $\cR$-orbits is simple, i.e., the space $M/\cR$, of orbits of $\cR$ carries a smooth manifold structure such that the canonical projection $p:M\to M/\cR$ is a surjective submersion. In other words, $p$ is a smooth fibration. We call $(M,\zh)$ \emph{complete} if the Reeb vector field is complete, i.e., its flow $\exp(t\cR)$ is a 1-parameter group of (global) diffeomorphisms of $M$. If $\cR$ is the generator of a principal action of a Lie group $\cG=\sU(1)$ or $\cG=\R$ on $M$, we call the contact manifold $(M,\zh)$ \emph{principal}.
\end{definition}
\no Of course, any principal contact manifold is regular and complete. We will prove the converse of this statement.

\begin{remark} Any regular compact contact manifold is complete. The dynamics of the Reeb vector fields on compact contact manifolds is a subject of intensive studies, partly because of its relation to Hamiltonian dynamics on a fixed energy hypersurface. For a very general geometric approach to contact Hamiltonian mechanics as a part of the classical symplectic Hamiltonian mechanics, we refer to \cite{Grabowska:2022}.
A long-standing open problem concerning the Reeb dynamics is the so-called \emph{Weinstein Conjecture}, stating that for contact forms on compact manifolds, the corresponding Reeb vector fields carry at least one periodic orbit. Note that in \cite{Weinstein:1979} it was supposed additionally that the manifold is simply connected because the author suspected the existence of a counterexample for a torus. The counterexample appeared to be false and this assumption has been finally dropped. This conjecture has been proved for some particular cases, especially for 3-dimensional manifolds \cite{Taubes:2007}. The origins and the history of the Weinstein Conjecture are nicely described in \cite{Pasquotto:2012}.
\end{remark}
\no The contact manifolds considered by Weinstein were hypersurfaces $M$ in a symplectic manifold $(P,\zW)$, equipped with a contact form $\zh$ such that $\zW\,\big|_M=\xd\zh$, where $\zW\,\big|_M$ is the restriction of $\zW$ to $M$. Weinstein called them \emph{hypersurfaces of contact type}. Since any hypersurface in a symplectic manifold is automatically coisotropic, the corresponding Reeb vector field on a hypersurface of contact type spans its characteristic distribution. It is proved in \cite{Weinstein:1979} that any hypersurface of contact type can be obtained by a \emph{symplectic-to-contact reduction}. Contact forms on $M$ associated with the symplectic $\R_+$-bundle $\zt:M\ti\R_+\to M$ are exactly of this type: we identify $M$ with the image of a global section which is automatically a hypersurface of contact type.
\begin{proposition}[symplectic-to-contact reduction] Let $(P,\zW)$ be a symplectic manifold and $M$ be a hypersurface in $P$. If $\n$ is a vector field, defined in a neighbourhood of $M$, such that
$\n$ is transversal to $M$ and $\Ll_\n\zW=\zW$, then the restriction $\zh$ to $M$ of the 1-form
$\tilde\zh=i_\n\zW$ is a contact form on $M$, and $\xd\zh=\zW\,\big|_M$.
\end{proposition}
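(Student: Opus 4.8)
The plan is to verify the three claimed conditions on $\tilde\zh=i_\n\zW$ directly, using only the transversality of $\n$, the condition $\Ll_\n\zW=\zW$, and the nondegeneracy of $\zW$. First I would compute $\xd\tilde\zh$ globally (on the neighbourhood where $\n$ is defined): by Cartan's magic formula, $\Ll_\n\zW=i_\n\xd\zW+\xd i_\n\zW=\xd i_\n\zW=\xd\tilde\zh$ since $\zW$ is closed, so $\xd\tilde\zh=\zW$ near $M$, and hence $\xd\zh=\zW\big|_M$ upon restricting to $M$. That already gives the last assertion of the proposition and reduces everything to showing that $\zh$, the pullback of $\tilde\zh$ to $M$, is a contact form, i.e.\ that $\zh\we(\xd\zh)^n$ is a volume form on $M$ (here $\dim M=2n+1$).

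Next I would check $\zh$ is nonvanishing along $M$. At $x\in M$, pick $v\in\sT_xP$ transverse to $\sT_xM$ in the direction of $\n$; then $\tilde\zh(\n)=i_\n i_\n\zW=\zW(\n,\n)=0$, so $\n$ lies in $\ker\tilde\zh$, which is therefore a hyperplane in $\sT_xP$ that is \emph{not} $\sT_xM$ (the former contains $\n(x)$, the latter does not). Consequently $\ker\tilde\zh\cap\sT_xM$ is a codimension-one subspace of $\sT_xM$, equivalently $\tilde\zh\big|_{\sT_xM}=\zh_x$ is a nonzero covector. So $\ker(\zh)$ is a rank-$2n$ hyperplane field on $M$.

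The core computation is nondegeneracy of $\xd\zh$ on $\ker(\zh)$, or equivalently that $\zh\we(\xd\zh)^n\neq0$ on $M$. Since $\xd\tilde\zh=\zW$ is a symplectic form on $P$ of rank $2n+2$, we have $\zW^{n+1}\neq0$ on $P$. I would relate $\zW^{n+1}$ to $\tilde\zh\we\zW^n$ by contracting with $\n$: using $i_\n(\zW^{n+1})=(n+1)\,(i_\n\zW)\we\zW^n=(n+1)\,\tilde\zh\we\zW^n$, and noting $i_\n(\zW^{n+1})$ is a nonvanishing $(2n+1)$-form near $M$ because $\zW^{n+1}$ is a volume form and $\n$ is nowhere zero (transversality forces $\n(x)\neq0$ on $M$). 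Hence $\tilde\zh\we\zW^n$ is nonvanishing near $M$. Finally I would show its restriction/pullback to $M$ coincides with $\zh\we(\xd\zh)^n$ and is still nonzero: pulling back along the inclusion $\iota:M\hookrightarrow P$ gives $\iota^*(\tilde\zh\we\zW^n)=\zh\we(\xd\zh)^n$; that this pullback does not vanish follows because $\tilde\zh\we\zW^n$, being a top form on a neighbourhood of the hypersurface $M$ and nonzero there, restricts nontrivially to $M$ precisely along any complement to $\n$ — concretely, $\iota^*(\tilde\zh\we\zW^n)$ evaluated on a basis of $\sT_xM$ equals $(\tilde\zh\we\zW^n)$ evaluated on that basis, and since $i_{\n}(\tilde\zh\we\zW^n)=0$ (as $\tilde\zh(\n)=0$ and $i_\n\zW^n$ paired again with $\tilde\zh$... ) one checks the only way $(\tilde\zh\we\zW^n)(\n,e_1,\dots,e_{2n+1})\neq0$ with $\{e_i\}$ spanning $\sT_xM$ is that $(\tilde\zh\we\zW^n)(e_1,\dots,e_{2n+1})\neq0$.

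The step I expect to be the main obstacle is the last one: cleanly extracting non-vanishing of the pullback $\iota^*(\tilde\zh\we\zW^n)$ from non-vanishing of the ambient top form $\tilde\zh\we\zW^n$. The clean way to see it is the identity $\zW^{n+1}=\xd\tilde\zh\we\zW^n$, contract with $\n$ to get $(n+1)\,\tilde\zh\we\zW^n$ up to the term $i_\n\xd\tilde\zh\we\zW^n = \Ll_\n\tilde\zh\we\zW^n - \xd(\tilde\zh(\n))\we\zW^n = \tilde\zh\we\zW^n$ (using $\Ll_\n\tilde\zh = \tilde\zh$, which follows from $\Ll_\n\zW=\zW$ and $\tilde\zh(\n)=0$); then since $\zW^{n+1}$ is a volume form and $\n$ is transverse to $M$, the contraction $i_\n\zW^{n+1}$ restricts to a volume form on $M$, and unwinding the identity shows this restriction equals (a nonzero multiple of) $\zh\we(\xd\zh)^n$. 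This avoids any messy basis chase. Everything else is a short application of Cartan calculus.
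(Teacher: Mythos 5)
Your proposal is correct, but the key nondegeneracy step is handled by a genuinely different (and longer) route than the paper's. You both begin identically, with Cartan's formula giving $\xd\tilde\zh=\Ll_\n\zW=\zW$ and hence $\xd\zh=\zW\big|_M$. After that, the paper finishes in one line by checking that the characteristic distribution is trivial: if $X\in\ker(\zh)\cap\ker(\xd\zh)$ at a point of $M$, then $\zW(\n,X)=\zh(X)=0$ and $\zW(\sT M,X)=0$, so by transversality $\zW(\sT P,X)=0$ and nondegeneracy of $\zW$ forces $X=0$; this pointwise linear-algebra criterion is equivalent to contactness and needs no top forms at all. You instead verify the volume-form criterion $\zh\we(\xd\zh)^n\neq 0$ via the identity $i_\n\zW^{n+1}=(n+1)\,\tilde\zh\we\zW^n$ together with the observation that $\iota^*(i_\n\zW^{n+1})$ is a volume form on $M$ because $\n$ is transverse and $\zW^{n+1}$ is a volume form on $P$. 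That argument is sound (your final paragraph is the clean version of it; the preceding basis-chasing passage, with the ill-typed expression $(\tilde\zh\we\zW^n)(\n,e_1,\dots,e_{2n+1})$ applied to $2n+2$ arguments, should just be deleted, and the separate verification that $\zh$ is nonvanishing is redundant since it follows from $\zh\we(\xd\zh)^n\neq0$). What your route buys is an explicit formula, $\zh\we(\xd\zh)^n=\tfrac{1}{n+1}\,\iota^*\big(i_\n\zW^{n+1}\big)$, exhibiting the contact volume as the contraction of the symplectic volume with $\n$; what the paper's route buys is brevity and the avoidance of any computation with powers of $\zW$.
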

\begin{proof}
The 2-form $\xd\zh$ is the restriction to $M$ of
$$\xd\tilde\zh=\xd\, i_\n\zW=\Ll_\n\zW=\zW.$$
If $X\in(\ker(\xd\zh)\cap\ker(\zh))$, then $\zW(\n,X)=0$ and $\zW(\sT M,X)=0$, thus $X=0$.
\end{proof}
\no There are various generalizations of the above proposition, see for instance \cite{Grabowski:2004}. We have also a canonical reduction going in the reverse direction.
\begin{proposition}[contact-to-symplectic reduction]
Let $(M,\zh)$ be a regular contact manifold, and let $p:M\to N=M/\cR$ be the corresponding fibration. Then there is a unique symplectic form $\zw$ on $N$ such that $p^*(\zw)=\xd\zh$,
\end{proposition}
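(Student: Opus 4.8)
The plan is to build $\zw$ by descent along the fibration $p\colon M\to N$. Since $p$ is a surjective submersion with connected fibres (each fibre is a single $\cR$-orbit), a differential form on $N$ is the same thing as a form on $M$ that is \emph{basic}, i.e. both horizontal (it vanishes when contracted with a vector tangent to the fibres) and invariant (its Lie derivative along fibre-tangent vector fields vanishes). The fibres of $p$ are exactly the $\cR$-orbits, so the vertical tangent bundle is spanned pointwise by $\cR$. Hence the first step is to check that $\xd\zh$ is basic for $p$: we have $i_\cR\,\xd\zh=0$ by the defining equation of the Reeb vector field, so $\xd\zh$ is horizontal, and $\Ll_\cR\,\xd\zh=\xd\,i_\cR\,\xd\zh+i_\cR\,\xd\xd\zh=0$ by Cartan's formula, so it is invariant. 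Therefore there is a unique 2-form $\zw$ on $N$ with $p^*\zw=\xd\zh$; uniqueness is immediate because $p^*$ is injective on forms (as $p$ is a submersion). It remains to see that $\zw$ is symplectic, i.e. closed and nondegenerate.

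Closedness is easy: $p^*(\xd\zw)=\xd(p^*\zw)=\xd\xd\zh=0$, and injectivity of $p^*$ forces $\xd\zw=0$. For nondegeneracy, fix $x\in M$ with $y=p(x)$ and note that $\sT_xp\colon \sT_xM\to\sT_yN$ is surjective with kernel $\R\cR(x)$. The 2-form $\xd\zh$ on $M$ has rank $2n$ at $x$ and its kernel is exactly $\R\cR(x)$: indeed, if $i_X\xd\zh=0$ then decomposing along $\cR$ and $C=\ker(\zh)$, the $C$-component lies in $\ker(\xd\zh|_C)=0$ since $\xd\zh$ is nondegenerate on the contact distribution $C$. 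Thus $\xd\zh$ descends to a pointwise-nondegenerate bilinear form on $\sT_xM/\R\cR(x)\cong \sT_yN$, which is precisely $\zw_y$. Since $x$ was arbitrary and $p$ is surjective, $\zw$ is nondegenerate everywhere, hence a symplectic form.

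I do not expect a genuine obstacle here; the statement is the standard ``reduction by the characteristic foliation'' and every ingredient is already available in the excerpt (the characterization of $\cR$, nondegeneracy of $\xd\zh$ on $C$, and the fact that $p$ is a smooth fibration so that basic forms on $M$ correspond to forms on $N$). The only point requiring a word of care is the passage from a pointwise-defined descended form to a genuine smooth form on $N$: this is exactly where one invokes that $p$ is a submersion (so one may work in fibred charts where $p$ is a projection and the basic form has no dependence on, and no component along, the fibre coordinate), rather than merely a bijective quotient map. Everything else is the two-line computation $p^*\zw=\xd\zh\Rightarrow \xd\zw=0$ together with the linear-algebra fact that a nondegenerate skew form of corank one on $\sT_xM$ descends to a nondegenerate form on the quotient by its kernel.
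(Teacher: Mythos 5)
Your proposal is correct and follows exactly the route the paper takes: the paper's proof simply observes that the kernel of the closed $2$-form $\xd\zh$ is spanned by $\cR$ and invokes the standard reduction of presymplectic manifolds, which is precisely the descent argument (horizontality via $i_\cR\xd\zh=0$, invariance via Cartan's formula, and pointwise nondegeneracy on $\sT_xM/\R\cR(x)$) that you spell out in detail.
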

\begin{proof}
The kernel of the closed 2-form $\xd\zh$ on $M$ is spanned by $\cR$, so one can apply the standard symplectic reduction of presymplectic manifolds.
\end{proof}
\begin{definition} The procedure of passing from $(M,\zh)$ to $(N,\zw)$ we call a \emph{contact-to-symplectic reduction}, and the contact manifold $(M,\zh)$ -- a \emph{contactification} of the symplectic manifold $(N,\zw)$.
\end{definition}
\no The following example is well known in the literature (see e.g. \cite[Appendix 4]{Arnold:1989}).
\begin{example}\label{ex2}
Let $(N,\zw)$ be an exact symplectic manifold, $\zw=\xd\zvy$.  Then
$$\zh(x,t)=\zvy(x)+\xd t$$
is a contact form on $M=N\ti\R$ and $(M,\zh)$ is a contactification of $(N,\zw)$. In the sequel we will use the multiplicative version of this contactification, replacing $N\ti\R$ with $N\ti\R_+$ and writing
\be\label{exact}
\zh(x,s)=\zvy(x)+\xd s/s.
\ee
In the first version, $\zh$ is invariant as a 1-form on the trivial $\R$-bundle $N\ti\R$, while in the second as living on the trivial $\R_+$-bundle $N\ti\R_+$.
\end{example}

\mn Finding contactifications of compact symplectic manifolds (they are never exact) is generally a more sophisticated task. Note also that contactifications are never unique, since any open submanifold $\cU\subset M$ of a contactification $(M,\zh)$ of $(N,\zw)$ which projects onto the whole $N$ is also a contactification of $(N,\zw)$ with the contact form $\zh\,\big|_\cU$. Particularly interesting are complete contactifications, e.g., compact contactifications of compact symplectic manifolds which cannot be obtained \emph{via} the above procedure.
Standard examples are odd-dimensional spheres.
\begin{example}\label{exsphere}
In quantum physics, the complex projective space $\C\P^{n-1}$ is viewed as the space of pure states in the Hilbert space $\C^n$. The standard basis $(e_k)$ of $\C^n$ induces global real coordinates
$(q^k,p_l)$, $k,l=1,\dots,n$, on $\C^n\simeq\R^{2n}$ by
$$\la e_k\,|\,x\ran=(q^k+i\cdot p_k)(x).$$
In these coordinates, the vector $\pa_{q^k}$ represents $e_k$ and $\pa_{p_k}$ represents
$i\cdot e_k$. The 1-form
\be\label{zvy}\zvy=\sum_k\big(q^k\xd p_k-p_k\xd q^k\big)\ee
we will call the \emph{Liouville 1-form}. Its restriction to the unit sphere $S^{2n-1}\subset\C^n$ is a contact form $\zh$ on $S^{2n-1}$ which turns the sphere into a regular contact manifold with periodic Reeb flow, acting as the diagonal subgroup $\sU(1)$ of $\SU(n)$ on $S^{2n-1}$. The base of this principal bundle is exactly $\C\P^{n-1}$ with its canonical Fubini-Study symplectic form. This reduction in stages is
$$\C\P^{n-1}=(\C^n)^\ti/\C^\ti=(\C^n)^\ti/\big(\R_+\ti\sU(1)\big)=\big((\C^n)^\ti/\R_+\big)/\sU(1)
=S^{2n-1}/\sU(1).$$
\end{example}

\section{Regular contact manifolds with compact orbits}
Let us consider now a regular contact connected manifold $(M,\zh)$, so that $p:M\to N=M/\cR$ is a smooth fibration. The fibers of this fibration consist of orbits of the Reeb vector field $\cR$ (being closed submanifolds in the regular case) which are diffeomorphic either to circles or to $\R$. On every compact orbit $\cO_x$, the flow generated by $\cR$ is periodic with the minimal period $\zr(x)$. Of course, if $M$ is compact, then all orbits, being closed, are circles automatically.

\mn Suppose for a moment that all $\cR$ orbits are circles. In this situation, Ehresmann's fibration theorem \cite{Ehresmann:1951}, stating that smooth fibrations $p:M\to N$ are locally trivial if only $p$ is a proper map (e.g., $M$ is compact), implies that our fibration by compact $\cR$-orbits is actually a locally trivial fibration. Indeed, this follows from the fact that every fiber has a tubular neighbourhood which is relatively compact.

\mn That the flow of $\cR$ is periodic as a whole is the subject of the following proposition, which is just an extension of Boothby-Wang results \cite{Boothby:1958} to the non-compact case. Having periodicity for open contact manifolds is crucial for the next steps of our characterization.
\begin{proposition}\label{pmain} Suppose that, for a connected regular contact manifold $(M,\zh)$, the fibration $p:M\to N=M/\cR$ is actually a fiber bundle over $N$ with the typical fiber $S^1$. Then the flow generated by $\cR$ on $M$ is periodic with the minimal period $\zr$ common for all orbits, which defines a principal action of the group $S^1$ turning $p:M\to N$ into an $S^1$-principal bundle. The contact form $\zh$ can be seen as the connection 1-form of a principal connection, whose curvature $\zw$ is a symplectic form on $N$, and $[\zw/\zr]\in H^2(N,\Z)$ represents the Euler class of this $S^1$-bundle.
\end{proposition}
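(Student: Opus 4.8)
The core of the matter is to prove that the minimal period $\zr(x)$ of the Reeb flow on the orbit $\cO_x$ does not depend on $x$; granting this, the remaining assertions follow by assembling standard facts about principal $S^1$-bundles and their characteristic classes. So the plan is to proceed in three stages.

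\emph{Constancy of the period.} Each fiber of $p$ is a single closed $\cR$-orbit, diffeomorphic to $S^1$; I would orient $\cO_b=p^{-1}(b)$ by the Reeb direction and set $\zr(b)=\int_{\cO_b}\zh$. Parametrizing $\cO_b$ by $t\mapsto\exp(t\cR)(x)$ on $[0,\zr(x)]$ and using $i_\cR\zh=1$ shows $\int_{\cO_b}\zh=\zr(x)$ for every $x\in\cO_b$, so this integral is the common value of the minimal period along the fiber. Here is where the fiber-bundle hypothesis (not merely "fibration") enters: over a trivializing neighbourhood $U\subseteq N$ of $b_0$, any nearby fiber $\cO_{b_1}$ together with $\cO_{b_0}$ bounds an embedded cylinder $\Sigma\subset p^{-1}(U)$ — the product of an embedded path in $U$ from $b_0$ to $b_1$ with $S^1$ — and $p(\Sigma)$ is a curve. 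By Stokes' theorem and $\xd\zh=p^*\zw$ (the contact-to-symplectic reduction above), $\zr(b_1)-\zr(b_0)=\int_\Sigma\xd\zh=\int_\Sigma p^*\zw=0$, the last integral vanishing because on $\Sigma$ the form $p^*\zw$ is pulled back from the $1$-dimensional set $p(\Sigma)$. Hence $b\mapsto\zr(b)$ is locally constant, and as $N$ is connected ($M$ is connected and $p$ surjective) it equals a constant $\zr\in(0,\infty)$.

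\emph{The $S^1$-action and the connection.} Since $\zr$ is the period of every orbit, $\exp(\zr\cR)=\id_M$, so the Reeb flow descends to an action of $S^1=\R/\zr\Z$ on $M$ whose fundamental vector field for $1\in\R=\Lie(S^1)$ is $\cR$. Minimality of $\zr$ makes the action free, and a free action of the compact group $S^1$ is automatically proper; by the quotient manifold theorem $M\to M/S^1$ is a principal $S^1$-bundle, and since its orbits are exactly the $\cR$-orbits, i.e. the fibers of $p$, the quotient is canonically identified with $N$, so $p:M\to N$ is this principal bundle. The Reeb conditions $i_\cR\zh=1$ and $\Ll_\cR\zh=0$ say precisely that $\zh$ is normalized on the fundamental vector field and $S^1$-invariant, i.e. $\zh$ is a principal connection $1$-form; as the structure group is abelian its curvature is $\xd\zh$, which is horizontal and invariant and hence equals $p^*\zw$ for the symplectic form $\zw$ on $N$ already produced by the contact-to-symplectic reduction.

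\emph{Integrality.} Rescaling the structure group from $\R/\zr\Z$ to $\sU(1)=\R/\Z$, by letting $[\theta]$ act through $\exp(\zr\theta\cR)$, replaces $\zh$ by the connection $1$-form $\zh/\zr$, whose curvature descends to $\zw/\zr$ on $N$. By Chern--Weil theory the de Rham class of this curvature is (with the usual orientation conventions) the image in $H^2(N,\R)$ of the integral Euler class of the $\sU(1)$-bundle; hence $[\zw/\zr]$ is integral, $[\zw/\zr]\in H^2(N,\Z)$, and it represents the Euler class of $p:M\to N$. Alternatively this can be checked in \v{C}ech cohomology with a good cover, local sections $\sigma_\alpha$ of $p$ with transition functions valued in $S^1=\R/\zr\Z$, and the local potentials $\sigma_\alpha^*\zh$. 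I expect the only genuinely non-formal point to be the constancy of the minimal period in the first stage — without it there is no circle group in sight — with the decisive device being the identification $\zr(b)=\int_{\cO_b}\zh$ together with $\xd\zh=p^*\zw$; everything afterwards is bookkeeping (including orientation conventions on fibers and cylinders) with the quotient manifold theorem and Chern--Weil.
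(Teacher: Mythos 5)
Your proof is correct, and its heart---the local constancy of the minimal period---goes by a genuinely different route than the paper's. The paper works in an explicit local trivialization lifted to $U\ti\R$: writing the lifted form as $g(x,\zt)\,\xd\zt+f_a(x,\zt)\,\xd x^a$ with $1$-periodic coefficients, it extracts $\pa g/\pa x^a=\pa f_a/\pa\zt$ from $i_\cR\xd\zh=0$ and integrates over one period to get $\pa\zr/\pa x^a=f_a(x,1)-f_a(x,0)=0$. You instead identify $\zr(b)$ with the invariant integral $\int_{\cO_b}\zh$ and kill its variation by Stokes on a cylinder swept out by nearby fibers, using that $\xd\zh=p^*\zw$ (or, even more directly, that $\xd\zh$ annihilates the fiber direction) so that the $2$-form vanishes on any surface which $p$ maps into a curve. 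The two arguments compute the same integral in disguise, but yours is coordinate-free and makes transparent exactly where the fiber-bundle (as opposed to mere fibration) hypothesis enters, namely in producing the cylinder. What the paper's coordinate computation buys in exchange is the adapted fiber coordinate $t=\int_0^\zt g(x,s)\,\xd s$, in which $\zh=\xd t+h_a(x)\,\xd x^a$; these normal forms are precisely the local potentials and transition data $t_\zb=t_\za+A_{\zb\za}(x)$ used in its subsequent \v Cech cocycle proof of $[\zw/\zr]\in H^2(N,\Z)$, a step you instead delegate to Chern--Weil/Euler-class theory, mentioning the \v Cech route only as an alternative. Both are legitimate; just be aware that in your write-up the integrality rests on a cited standard theorem (with its orientation and normalization conventions), whereas the paper carries out the self-contained cocycle computation.
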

\begin{proof}
Let us consider a local trivialization $M_U=p^{-1}(U)\simeq U\ti S^1$ of the fiber bundle $p:M\to N$, where $U$ is a connected open subset in $N$ with coordinates $(x^a)\in\R^{2n}$. It will be convenient to consider the standard covering of the circle
$$\R\ni \zt\mapsto [\zt]\in \R/\Z$$
and the corresponding covering
$$\zz:\tU=U\ti\R\to U\ti S^1.$$
It allows us to use coordinates $(x^a,\zt)$ on $\tU$ and to consider functions on $U\ti S^1$ as functions on $\tU$ which are 1-periodic with respect to $\zt$. The pullback of $\zh$ to $\tU$ is a contact form $\th$ with the pullback $\tR$ of $\cR$ as the Reeb vector field.
Let us write $\th$ in coordinates as
$$\th=g(x,\zt)\,\xd \zt+f_a(x,\zt)\,\xd x^a,$$
where $g(x,\zt)$ and $f_a(x,\zt)$ are 1-periodic in $\zt$. Since $\tR$ is tangent to the orbits,  $\tR=\pa_\zt/g$ and
\be\label{int}
\int_0^1g(x,\zt)\,\xd \zt=\zr(x)
\ee
is the minimal period of $\cR$ on $\cO_x$. We also have
$$
\xd\th=\frac{\pa g}{\pa x^a}(x,\zt)\,\xd x^a\we\xd \zt+\frac{\pa f_a}{\pa \zt}(x,\zt)\,\xd \zt\we\xd x^a
+\frac{\pa f_a}{\pa x^b}(x,\zt)\,\xd x^b\we\xd x^a.
$$
Since $i_\tR\xd\th=0$,
$$\Big(\frac{\pa g}{\pa x^a}(x,\zt)-\frac{\pa f_a}{\pa \zt}(x,\zt)\Big)\,\xd x^a=0,$$
so
$$\frac{\pa g}{\pa x^a}(x,\zt)=\frac{\pa f_a}{\pa \zt}(x,\zt)$$
for all $a$. Consequently (cf. (\ref{int})),
\beas &&\frac{\pa \zr}{\pa x^a}(x)=\frac{\pa}{\pa x^a}\Big(\int_0^1g(x,\zt)\,\xd \zt\Big) =\int_0^1\frac{\pa g}{\pa x^a}(x,\zt)\,\xd \zt\\
&&=\int_0^1\frac{\pa f_a}{\pa \zt}(x,\zt)\,\xd \zt=f_a(x,1)-f_a(x,0)=0,
\eeas
because $f_a(x,\zt)$ are 1-periodic with respect to $\zt$. Hence, $\zr(x)$ is constant on $U$. But $M$ is connected, so $\zr(x)$ is globally constant, $\zr(x)=\zr$. We have $\exp(t\cR)=\id$
if and only if $t\in\Z_\zr$, where $\Z_\zr=\zr\cdot\Z$. In other words, $p:M\to N$ is a $\T_\hbar$-principal bundle in the terminology of \cite{Bates:1997}, where $\hbar=\zr/2\pi$.

\m Now, we will change the coordinates in $\tU$ into $(x^a,t)$, parametrizing fibers of $\tU$ by trajectories of the lifted Reeb vector field $\tR=\pa_\zt/g$,
$$\big(x,t(x,\zt)\big)=\Big(x,\int_0^\zt g(x,s)\xd s\Big).$$
A direct inspection shows that the diffeomorphism $(x,\zt)\mapsto(x,t(x,\zt))$ maps the vector field $\tR$ onto $\pa_t$.  Hence, in the coordinates $(x^a,t)$ our contact form reads
\be\label{zhU}\th=g(x,\zt)\xd\zt+f_a(x,\zt)\xd x^a=\xd t+h_a(x,t)\,\xd x^a\ee
for some functions $h_a$. By a direct check we get that $h_a(x,t)=f_a(x,0)$, so
$h_a(x,t)=h_a(x)$ does not depend on $t$ for all $a$, and we get
\be\label{zhU1}\xd\th=\frac{\pa h_a}{\pa x^b}(x)\,\xd x^b\we\xd x^a.\ee
Finally, since $\xd\zh$ depends only on coordinates $(x^a)$, it is the pullback of a uniquely determined 2-form $\zw_U$ on $U$ having in coordinates $(x^a)$ formally the form (\ref{zhU1}). As $\xd\th$ is of rank $2n$, the form $\zw_U$ is symplectic. From the uniqueness of $\zw_U$ it follows that there is a globally defined symplectic form $\zw$ on $N$ such that $\zw\big|_U=\zw_U$, and $(M,\zh)$ is a contactification of $\zw$. Because for $\zvy_U=h_a(x)\,\xd x^a$ we have $\zw_U=\xd\zvy_U$,
each $\zw_U$ is exact but, clearly, $\zw$ need not to be exact globally.

\mn To prove the integrality condition $\big[\zw/\zr\big]\in H^2(N,\Z)$ (cf. also \cite{Kobayashi:1956} and \cite[Section 7.2]{Geiges:2008}), let us choose a \v Cech cover $\{U_\za\}$ of $N$ (all intersections of the cover members are connected and contractible), so the $S^1$-bundles $p:M_\za=p^{-1}(U_\za)\to U_\za$ are trivial $S^1$-principal bundles, and equip $\tU_\za$ with local coordinates $(x^a_\za,t_\za)$ as above. The contact form $\zh$ in these coordinates reads (cf. (\ref{zhU}))
$$\zh_\za=\widetilde{\zh}_{U_\za}=\xd t_\za+\zvy_\za(x).$$
On the intersection $U_{\za\zb}=U_\za\cap U_\zb$ consider coordinates $(x^a,t_\za)$ and $(x^a,t_\zb)$, respectively, where $(x^a)$ are local coordinates on $U_{\za\zb}$, the same for $U_\za$ and $U_\zb$. Since the diffeomorphism $(x^a,t_\za)\mapsto(x^a,t_\zb)$ corresponds to an isomorphism of $S^1$-principal bundles, we have $t_\zb(x,t_\za)=t_\za+A_{\zb\za}(x)$ for some function $A_{\zb\za}:U_{\za\zb}\to\R$. Of course, $A_{\za\zb}=-A_{\zb\za}$. Since the shift of $t_\za$ by
$$T_{\zg\zb\za}=A_{\za\zg}+A_{\zg\zb}+A_{\zb\za}$$ induces the identity on $M_\za$, we have the cocycle condition
\be\label{cc}T_{\zg\zb\za}\in\zr\cdot\Z.\ee
Since,
$$\zvy_\zb-\zvy_\za=\xd A_{\zb\za},$$
in view of the canonical isomorphism between the de Rham and the \v Cech cohomology,  (\ref{cc}) implies that the cohomology class of the closed 2-form $\zw/\zr$ in $H^2(N;\R)$ lies in $H^2(N;\Z)$,
\be\label{ic}\big[\zw/\zr\big]\in H^2(N;\Z).\ee
Such closed 2-forms $\zw$ are called \emph{$\zr$-integral}, and it is known that they are characterized by the property $\int_\zS\zw\in\Z_\zr$ for each closed 2-dimensional surface $\zS$ in $N$.

\end{proof}
\begin{remark}\label{prequant}
In the geometric quantization (see, for instance, \cite{Bates:1997,Brylinski:2008,Sniatycki:1980,Woodhouse:1992}), as $\zr$ is usually taken $2\pi\hbar$, where $\hbar$ is the Planck constant, and $[\zw/2\pi\hbar]\in H^2(N,\Z)$ is the well-known Weil integrality condition for the existence of a prequantum bundle on the symplectic manifold $(N,\zw)$. This is because there is an obvious one-to-one correspondence between $S^1$-principal bundles on $N$ and complex Hermitian line bundles $\C\hookrightarrow \L\to N$ associated with the standard $S^1=\sU(1)$-linear and isometric action on $\C$. The contact manifold $M$ is embedded in $\L$ as the submanifold of vectors with length 1. Prequantum states are sections $\psi$ of $\L$, and classical observables (Hamiltonians) $H:N\to \R$ give rise to linear operators on $\Sec(\L)$ given by
\be\label{QO}
\wh{H}(\psi)=-i\hbar\,\sD_{X_H}\psi+H\psi,
\ee
where $X_H$ is the Hamiltonian vector field of $H$.

It is well known that sections $\psi$ of $\L$ can be identified with $\sU(1)$-equivariant complex functions $F:M\to\C$ (for a real analog of this construction with applications to contact mechanics see \cite[Section 2]{Grabowska:2024b}),
$$F\big(\exp(t\cR)(y)\big)=e^{-2\pi it/\zr}\cdot F(y).$$
In our case, we have additionally a principal connection represented by the connection 1-form $\zh$, which induces a linear connection $\sD$ on $\L$ by
$$\sD_XF=X^h(F),$$
where $X^h$ is the horizontal lift of the vector field $X$ on $N$. This definition is correct, i.e., $X^h(F)$ is again equivariant, as the Reeb vector field $\cR$ commutes with the horizontal lifts of vector fields on $N$.
The Hermitian product of equivariant functions $F$ and $G$ is then represented by the function $F\ol{G}$ which is, clearly, $\sU(1)$-invariant, so it represents a function on $N$. The prequantum Hilbert space is the completion of the space of compactly supported sections with the scalar product
$$\la F,G\ran=\int_NF\ol{G}\cdot\zw^n,$$
where $\zw^n$ is the volume form associated with the symplectic form $\zw$.

Note that $[\zw/2\pi\hbar]\in H^2(N,\Z)$ represents the first Chern class of $\L$, which completely characterizes $\L$ up to isomorphism and does not depend on the choice of the connection and the metric \cite[Chapter 2]{Brylinski:2008}. Note also that the fact that $\zr=2\pi\hbar$ is the minimal period of $\cR$ is reflected in the notation in \cite{Bates:1997}, where the discrete additive subgroup $2\pi\hbar\cdot\Z$ of $\R$ is denoted $\Z_\hbar$ and the corresponding circle group $\T_\hbar=\R/\Z_\hbar$. In this sense, the $\sU(1)$ action induced by the flow of $\cR$ is understood as an action of $\T_\hbar$, i.e., a $2\pi\hbar$-periodic action of $\R$, and the integrality condition takes the form $[\zw]\in H^2(N,\Z_\hbar)$.
\end{remark}
\section{The general case}
Let us assume again that $(M,\zh)$ is a regular contact manifold and $\cR$ is a complete vector field, thus its flow is global and generates a smooth action $(t,x)\mapsto\exp(t\cR)(x)$ on $M$ of the group $(\R,+)$ of additive reals. \emph{A priori}, the dynamics of $\cR$ could contain both, compact and non-compact orbits. We will show that this is not possible if $\cR$ is complete. Note, however, that without the completeness assumption, such examples do exist. For instance, one can take a regular contact compact manifold (like $S^{2n-1}$ in Example \ref{exsphere}) and remove, say, one point from a fiber. Of course, after removing this point the Reeb vector field is no longer complete. To finish the proof of Theorem \ref{main}, we will first consider the case when all orbits of $\cR$ are non-compact.

\subsection{No compact orbits}
Suppose first that all orbits are non-compact. In this case, the $\R$-action induced by $\cR$ is free. Moreover, $p:M\to N=M/\cR$ is a fibration with fibers homeomorphic to $\R$, thus automatically a fiber bundle (see \cite[Corollary 31]{Meigniez:2002}). It is easy to see that the free $\R$-action of the flow of $\cR$ which respects the fibers of this fiber bundle is automatically proper, so it turns this fiber bundle into an $\R$-principal bundle. Locally, $p^{-1}(U)=U\ti\R$, and using the flow of $\cR$ to parametrize the fibers, we get local coordinates $(x^a,t)$ on $U\ti\R$ such that $\cR=\pa_t$. In other words, the $\R$-action on $U\ti\R$ is $s.(x,t)=(x,t+s)$. This form of $\cR$ implies that $\zh$ can be locally written as
$$\zh=\xd t+f_a(x,t)\,\xd x^a.$$
Hence,
$$\xd\zh=\frac{\pa f_a}{\pa t}(x,t)\,\xd t\we\xd x^a
+\frac{\pa f_a}{\pa x^b}(x,t)\,\xd x^b\we\xd x^a,$$
and because $i_\cR\xd\zh=0$, we get $\frac{\pa f_a}{\pa t}(x,t)=0$ for all $a$. It follows that the functions $f_a$ do not depend on $t$, $f_a(x,t)=f_a(x)$, so $\zh=\xd t+\zvy_U$, where $\zvy_U=f_a(x)\xd x^a$ is the pullback of a 1-form on $U$. Since
\be\label{symp}\xd\zh=\xd\zvy_U=\frac{\pa f_a}{\pa x^b}(x)\,\xd x^b\we\xd x^a\ee
is of rank $2n$, it is the pullback of a uniquely determined symplectic form $\zw_U$ on $U$ which
in coordinates $(x^a)$ looks exactly like (\ref{symp}). Being uniquely determined by $\xd\zh$, the symplectic forms $\zw_U$ agree on the intersections $U_1\cap U_2$, so that there is a symplectic form $\zw$ on $N$ such that $\xd\zh=p^*(\zw)$. In other words, $(M,\zh)$ is a contactification of $(N,\zw)$ on which the $\R$-action generated by the flow of $\cR$ defines an $\R$-principal bundle structure. Moreover, $\zw$ represents the curvature of the principal connection $\zh$.

The fiber bundle $p:M\to N$ is trivializable, as the fibers are contractible. Using a global section $\zs:N\to M$ to identify $N$ with a submanifold $\zs(N)$ in $M$, we can view $M\simeq N\ti\R$ as a trivial $\R$-principal bundle over $N$. Let $\zh_\zs$ be the contact form $\zh$ reduced to the horizontal submanifold $\zs(N)$. By the identification $N\simeq \zs(N)$ given by the section $\zs$ (or the projection $p$) we can view $\zh_\zs$ as a 1-form on $N$. Since the pullback by $p$ of $\xd\zh_\zs$ is $\xd\zh$, we have $\xd\zh_\zs=\zw$, so the symplectic form $\zw$ is exact. Therefore, it is easy to see that $(M,\zh)$ is the standard contactification of the symplectic form $\zw=\xd\zh_\zs$ described in Example \ref{ex2}.
\begin{remark}\label{prequant1}
The prequantization picture in the above case is much simpler than in the case of $\sU(1)$-action. To work with the multiplicative notation as in the case of $\sU(1)$-bundle, we can consider $M=N\ti\R_+$ and $\zh(x,s)=\zvy(x)+\xd s/s$. In this case $\zr=+\infty$ and $[\zw/\zr]=0\in H^2(N,\hbar\,\Z)$ for any $\hbar>0$. The prequantum bundle is the trivial line bundle, $\L=N\ti\C$, associated with the principal $\R_+$-bundle $p:M\to N$ with respect to the $\R_+$-action on $\C$ by multiplication. The linear connection $\sD$ on $\L$ associated with the trivial connection on $N$ is the trivial connection $\sD_X\psi=X(\psi)$, so we can put
$$\wh{H}_\hbar(\psi)=-i\hbar\,X_H(\psi)+H\psi,$$
which works now for any $\hbar>0$.
\end{remark}
\subsection{There exists a compact orbit}
For a result being a variant of the celebrated \emph{Reeb local stability theorem} and describing the behavior of smooth fibrations near a compact fiber, we refer to Meigniez \cite[Lemma 22]{Meigniez:2002}. It simply says that, for a smooth fibration, every compact subset of every fibre has a product neighborhood. In our situation, this immediately implies that if $x$ is a point in $N$ for which the orbit $\cO_x=p^{-1}(x)$ is compact, then there is a (connected) neighbourhood $U\subset N$ of $x$ such that $p$ is a fiber bundle when restricted to $p^{-1}(U)$. In other words, any compact orbit has a neighbourhood $M_U=p^{-1}(U)$ in which $p$ is a trivializable fiber bundle, $M_U\simeq U\ti S^1$, with the typical fiber $S^1$.

\mn It follows now from Proposition \ref{pmain} that in the open submanifold $M_U\subset M$ the Reeb vector field induces an $S^1$-principal action. Let us fix such $U$ and denote the corresponding period $\zr$. Let $$M_{\zr}=\{x\in N:\,\zr(x)=\zr\}$$
be the set of points of $N$ for which $\cO_x$ is a compact orbit with the minimal period $\zr$.
It is clear from what has been said that $M_{\zr}$ is open. We will show that it is also closed.

\mn Indeed, let $x_0\in N$ belong to the closure of $M_{\zr}$ and $y_0\in\cO_{x_0}$. The submersion $p:M\to N$ is an open map, so in a neighbourhood of $y_0$ there is a sequence of points $(y_n)$ such that $y_n\rightarrow y_0$ and $p(y_n)=x_n\in M_{\zr}$. Since $\cR$ is complete, its flow $\zf_t=\exp(t\cR)$ is globally defined for all $t\in\R$. We have then
$$y_n=\zf_\zr(y_n)\rightarrow\zf_\zr(y_0).$$
Hence, $\zf_\zr(y_0)=y_0$, so $\cO_{x_0}$ is also a periodic orbit with period $\zr$.
This is, in fact, the minimal period for $\cO_{x_0}$, since the minimal period (Proposition \ref{pmain}) is locally constant on periodic orbits.
For connected $N$, all this implies that if there is one periodic orbit of $\cR$ with the minimal period $\zr$, then all orbits are periodic with the same minimal period $\zr$. This finishes the proof of Theorem \ref{main}.

\section{Products of contact manifolds and contact relations}
The concept of a \emph{contact product} of contact manifolds is essentially known, although it was not extensively studied in the literature. Recently \cite{Grabowska:2024a}, it was also defined for Sasaki manifolds. Of course, the contact product cannot live on the Cartesian product of the manifolds, as the latter is of even dimension. For cooriented manifolds, a concept of such a product appeared in \cite{Ibanez:1997}, however, the contact form for this product is chosen \emph{ad hoc}; in fact, there are infinitely many possibilities for such a choice. This is because this product is defined canonically only in the distributional setting of contact geometry. Indeed, there exists a canonical approach related to the obvious product of symplectic $\Rt$-bundles, which are symplectizations of contact manifolds \cite{Arnold:1989,Bruce:2017,Grabowski:2013}.

Let us start with products of principal $G$-bundles. The product bundle structure
$$\zt=(\zt_1,\zt_2):P_1\ti P_2\to M_1\ti M_2,$$
where $\zt_i:P_i\to M_i$ is a principal $G_i$-bundle with a $G_i$-action $h^i:G_i\ti M_i\to M_i$, $i=1,2$, is
canonically a principal $G_1\ti G_2$-bundle, with the principal action
$$\bh_{(g_1,g_2)}(y_1,y_2)=\big(h^1_{g_1}(y_1),h^2_{g_2}(y_2)\big).$$
Actually, this is the simplest example of a \emph{double principal bundle} \cite{Grabowska:2018,Lang:2021}. Note that the `compatibility' of principal actions of groups $G_1$ and $G_2$ in the definition of double principal bundles is not just a commutation of these actions, as every group action should be compatible with itself. Canonical non-product examples are the frame bundles of double vector bundles \cite{Grabowska:2018}; for a very effective approach to double vector bundles see \cite{Grabowski:2009,Grabowski:2012}.

A particularly interesting situation is when $G_1=G_2=G$. In this case, we can consider the diagonal action
$$\hat h_g(y_1,y_2)=\big(h^1_g(y_1),h^2_{g}(y_2)\big),$$
which makes the product manifold $P_1\ti P_2$ into a principal $G$-bundle, denoted $P_1\bti P_2$,
$$\bar\zt:P_1\bti P_2\to M_1\bt M_2=(P_1\ti P_2)/G,$$
where the quotient is with respect to the diagonal action.
Moreover, we have obvious canonical $G$-bundle morphisms of $P_1\bti P_2$ onto $P_1$ and $P_2$. This construction produces canonically a $G$-bundle out of two $G$-bundles and it is easy to see that it is a product in the category of principal $G$-bundles. Note that, in the case of $G=\Rt$, the category of $\Rt$-bundles is equivalent to the category of line bundles, and $\bti$-products of $\Rt$-bundles correspond to similar products in the category of line bundles  (see \cite{Le:2018,Schnitzer:2023,Schnitzer:2023a,Zapata:2020}).

\medskip Suppose now that $G$ is commutative. Let us observe that in this case, the manifold $M_1\bt M_2$ is  itself a principal bundle with the canonical action of the quotient group $\hat G=(G\ti G)/G$, with $G\subset G\ti G$ being the diagonal normal subgroup. If $[(y_1,y_2)]$ is the class of $(y_1,y_2)\in P_1\ti P_2$, and $[(g_1,g_2)]$ is the class of $(g_1,g_2)\in G_1\ti G_2$, then
$$\sh_{[(g_1,g_2)]}\big([(y_1,y_2)]\big)=\big[\big(h^1_{g_1}(y_1),h^2_{g_2}(y_2)\big)\big].$$
It is easy to check that this definition makes sense.
The group $\hat G$ is isomorphic with $G$, however, in many ways; no one is manifestly privileged. For instance, such isomorphisms can be obtained from the two canonical embeddings $G\to G\ti G$ into the first and the second factor, respectively, but these isomorphisms give two opposite actions $\bar h^1_g=\bar h^2_{g^{-1}}$ of $G$ on $P_1\ti P_2$,
$$\bar h^1_g\big([(y_1,y_2)]\big)=\big[\big(h^1_{g}(y_1),y_2\big)\big]\quad\text{and}\quad
\bar h^2_g\big([(y_1,y_2)]\big)=\big[\big(y_1, h_g^2(y_2)]\big)\big].
$$
Consequently, $M_1\boxtimes M_2$ is a principal $\hat G$-bundle over the base manifold $(P_1\ti P_2)/(G\ti G)=M_1\ti M_2$, with the projection
$$[\zt]:M_1\boxtimes M_2\to M_1\ti M_2,$$
but there is no privileged $G$-action. We can use, for instance, those coming from $\bar h^1$ or $\bar h^2$.

\mn Let now $G=\Rt$, let $(M_i,C_i)$, $i=1,2$, be contact manifolds, and let $\zt_i:P_i\to M_i$ be corresponding symplectic $\Rt$-bundles (symplectizations), with $\Rt$-actions $h^1,h^2$, and  homogeneous symplectic forms $\zw_1,\zw_2$. It is clear that the symplectic form
$$(\zw_1\op\zw_2)(y_1,y_2)=\zw_1(y_1)+\zw_2(y_2)$$
on $P_1\ti P_2$ is homogeneous with respect to the diagonal action of $\Rt$, so, according to our philosophy, it is a symplectization of a contact manifold $(M,C)$, where $M=M_1\bt M_2$ and
\be\label{ncs}C=C_1\boxtimes C_2=\sT\bar\zt\big(\ker(\zvy)\big).
\ee
Here, $\zvy$ is the Liouville 1-form on the product symplectic $\Rt$-bundle $(P_1\bti P_2,\zw_1\op\zw_2)$, being the sum of the Liouville 1-forms on $P_1$ and $P_2$,
$$\zvy(y_1,y_2)=(\zvy_1\op\zvy_2)(y_1,y_2)=\zvy_1(y_1)+\zvy_2(y_2),$$
and the kernel of $\zvy$ is invariant with respect to the diagonal action, thus projects onto the contact distribution $C_1\boxtimes C_2$ on $M_1\boxtimes M_2$.
This way, we obtained a canonical procedure of constructing a contact manifold $(M_1\bt M_2,C_1\bt C_2)$ out of two contact manifolds. The resulting contact manifold we will call the \emph{contact product of contact manifolds} $(M_i,C_i)$, $i=1,2$.
\begin{remark}
Note that the notation $M_1\boxtimes M_2$ (resp., $C_1\boxtimes C_2$) is a little bit misleading, as this product is not determined exclusively by $M_1$ and $M_2$. In fact, both objects give rise to a well-defined contact manifold $(M_1,C_1)\bt(M_2,C_2)$.
\end{remark}
\begin{example}\label{prodcont} To see the above construction in local coordinates, let us take the product of local trivializations of $P_1$ and $P_2$, and consider the associated bundle coordinates $(x_1,x_2,s_1,s_2)$. In these coordinates,
$$\zw(x_1,x_2,s_1,s_2)=\xd s_1\we\zh_1(x_1)+s_1\cdot\xd\zh_1(x_1)
+\xd s_2\we\zh_2(x_2)+s_2\cdot\xd\zh_2(x_2)$$
and
$$\zvy(x_1,x_2,s_1,s_2)=s_1\cdot\zh_1(x_1)+s_2\cdot\zh_2(x_2),$$
where $\zh_1$ and $\zh_2$ are the contact forms on $M_1$ and $M_2$, respectively, associated with the trivializations.
After the reduction by the diagonal action of $\Rt$ on $P_1\ti P_2$, out of the action $\bar h^1$ we get a trivialization of the principal $\Rt$-bundle $[\zt]:M_1\bt M_2\to M_1\ti M_2$ with local coordinates $(x_1,x_2,t)$, where $t\in\Rt$. This corresponds to the identification $(\Rt\ti\Rt)/\Rt\simeq\Rt$ associated with the parametrization $\Rt\ti\Rt\ni(t,s)\mapsto(st,s)$.
The contact form associated with this parametrization is
\be\label{pcf}
\zh(x_1,x_2,t)=t\zh_1(x_1)+\zh_2(x_2).
\ee
The Reeb vector field is $\cR_2$ and the contact distribution is the Whitney sum
\be\label{c1}C_1\bt C_2=C_1\op C_2\op\la \cR_1-t\cR_2,\pa_t\ran.\ee
If we use the parametrization of $M_1\bt M_2$ associated with the $\Rt$-action $\bar h^2$ instead of $\bar h^1$, we get
$$\zh'(x_1,x_2,t')=\zh_1(x_1)+t'\zh_2(x_2).
$$
The Reeb vector field is now $\cR_1$ and
\be\label{c2}C_1\bt C_2=C_1\op C_2\op\la t'\cR_1-\cR_2,\pa_{t'}\ran.\ee
Of course, the contact distribution $C_1\bt C_2$ is determined uniquely, and (\ref{c1}) is the same as (\ref{c2}), since $t'=1/t$ and
$$t'\cR_1-\cR_2=\frac{1}{t}\big(\cR_1-t\cR_2),\quad \text{and}\quad\pa_{t'}=-t^2\pa_t,$$
so the distributions $\la \cR_1-t\cR_2,\pa_t\ran$ and $\la t'\cR_1-\cR_2,\pa_{t'}\ran$ are the same.
The contact forms $\zh$ and $\zh'$ are different but equivalent, since they define the same contact distribution.
This confirms the advantage of working with contact distributions and not contact forms, since we are not tied to a particular trivialization. Moreover, the contact distribution $C_1\bt C_2$ is canonical, while the corresponding contact forms like $\zh$ or $\zh'$ can be arbitrary.
\end{example}

\begin{remark}
Let us remark that the above procedure can be applied \emph{mutatis mutandis} to general Jacobi structures \cite{Vitagliano:2019}, which are understood as \emph{Poisson $\Rt$-bundles} \cite{Bruce:2017,Marle:1991,Vitagliano:2018}. In \cite{Ibanez:1997}, the products have been defined for trivial bundles, i.e., Jacobi brackets in the sense of Lichnerowicz \cite{Lichnerowicz:1978}, and trivial contact manifolds as a particular case, however, the attachment to the established contact form and trivialization makes the picture a little bit less geometrical.

\mn Note also that the two obvious projections
$$(M_1,C_1)\leftarrow (M_1\bt M_2,C_1\bt C_2)\rightarrow (M_2,C_2)$$
provide a canonical example of a \emph{contact dual pair}, a concept studied in \cite{Blaga:2020}, and being closely related to concepts of Jacobi and Poisson dual pairs and the Morita equivalence \cite{Weinstein:1983,Xu:1991}.
We will not go deeper into the corresponding theory here, postponing it to a separate paper.
\end{remark}
\no Having defined the products, we can propose a natural definition of a contact relation.
\begin{definition}
A \emph{contact relation} between contact manifolds $(M_i,C_i)$, $i=1,2$, is a Legendrian submanifold $\cL$ in the contact product $(M_1,C_1)\bt(M_2,C_2)$.
\end{definition}
\begin{example} Consider a contactomorphism between contact manifolds $(M_1,C_1)$ and $(M_2,C_2)$, i.e.,
a diffeomorphism $\zf:M_1\to M_2$ respecting the contact distributions,
\be\label{ccm}\sT_y\zf\big(C_1(y)\big)=C_2\big(\zf(y)\big).
\ee
Locally (or in a cooriented case), if $\zh_1$ and $\zh_2$ are contact forms representing the contact distributions, this means that
\be\label{cm}\zf^*(\zh_2)=f\zh_1,
\ee
for a nonvanishing function $f$ on $M_1$. If $(P_i,\zw_i)$ is a symplectic cover of $(M_i,C_i)$, $i=1,2$, then $\zf$ is covered by an isomorphism $\zF:P_1\to P_2$ of symplectic $\Rt$-bundles (cf. \cite{Grabowska:2023}). We can as well take $\zF$ to be an anti-symplectomorphism, since $(P_2,\zw_2)\simeq(P_2,-\zw_2)$. The graph of $\zF$, $\Graph(\zF)$, is then a Lagrangian submanifold in the product $(P_1\ti P_2,\zw_1\ominus\zw_2)$ of symplectic manifolds. This graph is $\Rt$-invariant, so its projection onto the contact manifold $(M_1\bt M_2,C_1\bt C_2)$ is a Legendre submanifold $\Graph_c(\zf)$, the contact relation associated with $\zf$.
For local trivializations in Example \ref{prodcont}, we get
$$\Graph_c(\zf)=\big\{(x_1,x_2,s)\in M_1\ti M_2\ti\Rt\,\big|\, x_2=\zf(x_1),\ s=-f\big\}.$$
It is easy to see that the contact form (\ref{pcf}) vanishes on $\Graph_c(\zf)$, so the latter is a Legendre submanifold.
\end{example}
\section{Products of prequantization bundles}
In the previous section, we defined a contact product of contact manifolds, whose dimension is by 1 greater than the sum of dimensions of these manifolds. Here, we will define a product of principal contactifications $(M_i,\zh_i)$ of symplectic manifolds $(N_i,\zw_i)$, $i=1,2$, which is a contact manifold whose dimension is by 1 smaller than the sum of dimensions of $M_1$ and $M_2$.

\mn First, let us assume that the minimal periods of the Reeb vector fields $\cR_1$ and $\cR_2$ are equal and finite, say, $\zr\in\R_+$. Consider the Cartesian product manifold $M_1\ti M_2$ endowed with the product 1-form $\big(\zh_1\op\zh_2\big)$.
Of course, $(\zh_1\op\zh_2)$ living on an even-dimensional manifold cannot be contact. Its kernel is
$$\ker(\zh_1)\op\ker(\zh_2)\op\la \cR_1-\cR_2\ran,$$
so its characteristic distribution (cf. \cite[Definition 2.5]{Grabowska:2024}) is
$$\zq(\zh_1\op\zh_2)=\ker(\zh_1\op\zh_2)\cap\ker\big(\xd(\zh_1\op\zh_2)\big)=\la \cR_1-\cR_2\ran.$$
Since $\cR_1$ and $\cR_2$ are commuting, linearly independent, and periodic with the same minimal period $\zr$, the vector field $\cR_1-\cR_2$ is again periodic with the minimal period $\zr$. Moreover, since the trajectories of $\cR_1$ and $\cR_2$ form regular foliations being fibers of $S^1$-principal bundles, the flow of the vector field $\cR_1-\cR_2$ induces also a principal $S^1$-action. In other words, the 1-form $(\zh_1\op\zh_2)$ is \emph{simple} in the terminology of \cite[Definition 2.5]{Grabowska:2024}), so it admits a regular reduction \cite[Theorem 2.6]{Grabowska:2024}) to a contact form $\zh$ on the reduced manifold $M=(M_1\ti M_2)/\la\cR_1-\cR_2\ran$. The reduced contact manifold $(M,\zh)$ we will call the \emph{product of principal contact manifolds} $(M_i,\zh_i)$, $i=1,2$, and denote $\big(M_1\uti M_2,\zh_1\uti\zh_2\big)$. If, in local coordinates like (\ref{zhU}), we have
$$\zh_1(x,t)=\xd t+h_a(x)\xd x^a,\quad\text{and}\quad \zh_2(y,s)=\xd s+g_i(y)\xd y^i,$$
then
$$\zh(x,y,t)=\xd t+h_a(x)\xd x^a+g_i(y)\xd y^i.$$
Vector fields $X$ on $M_1\ti M_2$, being affine combinations of $\cR_1$ and $\cR_2$, $X=a\cR_1+b\cR_2$ with $a+b=1$, commute with $\cR_1-\cR_2$, thus project onto the Reeb vector field $\cR$ of $\zh$. The Reeb vector field is again periodic with the minimal period $\zr$ and induces a principal $S^1$ action on $M$.
This way, we have obtained a new regular contact manifold, being a contactification of the symplectic manifold
$(N_1\ti N_2,\zw_1\op\zw_2)$, which we will call the \emph{product of principal contactifications}.

\begin{example}
We know from Example \ref{exsphere} that odd-dimensional contact spheres $S^{2n+1}$ are $\sU(1)$-principal contactifications of complex projective spaces $\C\P^n$ with the minimal period $2\pi$. Hence, $M=(S^{2n+1}\ti S^{2m+1})/\sU(1)$, where the $\sU(1)$-action is diagonal, carries a canonical structure of a $\sU(1)$-principal contactification of the Cartesian product $\C\P^n\ti\C\P^m$ of complex projective spaces.
\end{example}
\no Note that our construction of $M_1\uti M_2$ corresponds to the tensor product on the level of prequantizations. To see this, let us consider the complex line bundle $\zp_j:\L_j\to N_j$ corresponding to the $\sU(1)$ principal bundle $\zt_j:M_j\to N_j$, $j=1,2$. Recall that sections of $\L_j$ are identified with complex functions $F_j:M_j\to\C$ which are equivariant,
$$F_j\big(\exp(t\cR_j)(y_j)\big)=e^{-2\pi it/\zr}\cdot F_j(y_j).$$
Sections of the tensor product line bundle
\be\label{ppqq}\zp:\L_1\ot\L_2\to N_1\ti N_2,\ee
are generated by tensor products of sections of $\L_1$ and sections of $\L_2$, which in our identifications,
are products
$$(F_1\ot F_2)(y_1,y_2)=F_1(y_1)\cdot F_2(y_2)$$
of equivariant functions in $M_1$ and $M_2$, respectively. They are formally defined on $M_1\ti M_2$, but because $F_1$ and $F_2$ are equivariant, they are constant on the paths
$$\big(\exp(t\cR_1)(y_1),\exp(-t\cR_2)(y_2)\big),\quad t\in\R,$$
so they effectively live on $M_1\uti M_2$, and are equivariant with the induced $\sU(1)$-action there,
$$\Big(\exp\Big(\frac{t}{2}\cR_1\Big)(y_1),\exp\Big(\frac{t}{2}\cR_2\Big)(y_2)\Big)
=e^{-2\pi it/\zr}\cdot F_1(y_1)\cdot F_2(y_2).$$
It is easy to see that the Hermitian metric is the product metric as well as the connection. Summing up these observations we get the following.
\begin{theorem}
For any principal contactifications $(M_j,\zh_j)$ of symplectic manifolds $(N_j,\zw_j)$, $i=1,2$, with the same minimal period $\zr$ of the corresponding Reeb vector fields $\cR_j$, $i=1,2$, its product
$$M_1\uti M_2=(M_1\ti M_2)/(\cR_1-\cR_2)$$ endowed with the reduced contact form
$$\zh_1\uti\zh_2=(\zh_1\op\zh_2)/(\cR_1-\cR_2)$$
is a principal contactification of the product symplectic manifold $(N_1\ti N_2,\zw_1\op\zw_2)$. The corresponding
prequantization bundle is the tensor product complex line bundle (\ref{ppqq}) with the product Hermitian metric
$$\La\psi_1(x_1)\ot\psi_2(x_2),\psi'_1(x_1)\ot\psi'_2(x_2)\Ra
=\La\psi(x_1),\psi(x_2)\Ra\cdot\La\psi'(x_1),\psi'(x_2)\Ra$$
and the product linear connection
$$\sD_{X_1\op X_2}\psi_1\ot\psi_2=\sD^1_{X_1}\psi_1\ot\psi_2+\psi_1\ot\sD^2_{X_2}\psi_2.$$
\end{theorem}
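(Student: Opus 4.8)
The plan is to assemble the construction carried out in the paragraphs preceding the statement and then to verify the two ``easy to see'' assertions about the Hermitian metric and the linear connection. The geometric backbone is already in place: the $1$-form $\zh_1\op\zh_2$ on $M_1\ti M_2$ is simple in the sense of \cite[Definition 2.5]{Grabowska:2024}, its characteristic distribution being $\la\cR_1-\cR_2\ran$, a complete line field that is periodic with minimal period $\zr$ and generates a free $S^1$-action (because the $\cR_1$- and $\cR_2$-foliations are fibres of $S^1$-principal bundles), so the regular reduction \cite[Theorem 2.6]{Grabowska:2024} yields the contact manifold $(M_1\uti M_2,\zh_1\uti\zh_2)$. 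First I would make the ``quotient in stages'' step explicit: the product of the Reeb circle actions of $\zh_1$ and $\zh_2$ is a free and proper action of $S^1\ti S^1$ on $M_1\ti M_2$ with orbit space $(M_1/\cR_1)\ti(M_2/\cR_2)=N_1\ti N_2$, and the commuting complete fields $\cR_1-\cR_2$ and $\half(\cR_1+\cR_2)$ span the same foliation as this torus action; dividing first by $\cR_1-\cR_2$ produces $M_1\uti M_2$, while $\half(\cR_1+\cR_2)$ -- an affine combination of $\cR_1,\cR_2$ with coefficient sum $1$ -- commutes with $\cR_1-\cR_2$ and hence descends to a vector field $\cR$ on $M_1\uti M_2$; since $(\zh_1\uti\zh_2)$ pulls back to $\zh_1\op\zh_2$, one gets $i_\cR(\zh_1\uti\zh_2)=1$ and $i_\cR\,\xd(\zh_1\uti\zh_2)=0$, so $\cR$ is the Reeb field. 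Consequently $(M_1\uti M_2)/\cR=N_1\ti N_2$, $\xd(\zh_1\uti\zh_2)$ is the pullback of $\zw_1\op\zw_2$, and $(M_1\uti M_2,\zh_1\uti\zh_2)$ is a regular, complete contactification of $(N_1\ti N_2,\zw_1\op\zw_2)$; Theorem \ref{main} then makes it an $S^1$-principal bundle over $N_1\ti N_2$ with $\zh_1\uti\zh_2$ a principal connection of curvature $\zw_1\op\zw_2$, its minimal period is seen to be $\zr$ by solving $\exp(t\cR)=\id$, and $\big[(\zw_1\op\zw_2)/\zr\big]\in H^2(N_1\ti N_2,\Z)$ follows (from Theorem \ref{main}, or by K\"unneth from integrality of $[\zw_1/\zr]$ and $[\zw_2/\zr]$).

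Next I would identify the associated prequantization line bundle via the equivariant-function model of sections from Remark \ref{prequant}. The bilinear map sending a pair $(F_1,F_2)$ of $\zr$-equivariant functions on $M_1$ and $M_2$ to $(y_1,y_2)\mapsto F_1(y_1)F_2(y_2)$ takes values in functions that are constant along the orbits of $\cR_1-\cR_2$ -- the phases $e^{-2\pi it/\zr}$ from $\cR_1$ and $e^{+2\pi it/\zr}$ from $-\cR_2$ cancel -- so it descends to $M_1\uti M_2$, and along the descended Reeb flow $\exp(t\cR)[y_1,y_2]=\big[\exp(\half t\cR_1)y_1,\exp(\half t\cR_2)y_2\big]$ the two half-phases add up to $e^{-2\pi it/\zr}$; hence the descended functions are exactly the $\zr$-equivariant functions on $M_1\uti M_2$, i.e. the sections of the line bundle $\L$ associated with $M_1\uti M_2\to N_1\ti N_2$. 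Checking on local trivializations that this correspondence is a smooth isomorphism of line bundles -- products of local frames of $\L_1$ and $\L_2$ are local frames of $\L$, with transition functions the products of those of $\L_1,\L_2$ -- identifies $\L$ with $\L_1\ot\L_2$.

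Finally I would verify the two structures. The pointwise Hermitian product of $F_1F_2$ and $G_1G_2$ is $(F_1\ol{G_1})\cdot(F_2\ol{G_2})$, representing the product of the functions on $N_1$ and on $N_2$ that represent $\La F_1,G_1\Ra$ and $\La F_2,G_2\Ra$; moreover, for $\zw=\zw_1\op\zw_2$ the Liouville volume $\zw^n$ is, up to a numerical factor, the product of the Liouville volumes of $(N_1,\zw_1)$ and $(N_2,\zw_2)$, so the $L^2$-pairing factorizes and the prequantum Hilbert space is the completed tensor product. For the connection, the Reeb field commutes with horizontal lifts (Remark \ref{prequant}), so $X_1^h\op X_2^h$ on $M_1\ti M_2$ (with $X_i^h$ horizontal for $\zh_i$) lies in $\ker(\zh_1\op\zh_2)$ and commutes with $\cR_1-\cR_2$, hence descends to a field on $M_1\uti M_2$ lying in $\ker(\zh_1\uti\zh_2)$ and projecting to $X_1\op X_2$, i.e. it is the $(\zh_1\uti\zh_2)$-horizontal lift of $X_1\op X_2$; evaluating it on $F_1F_2$ and applying the Leibniz rule gives $\sD_{X_1\op X_2}(F_1\ot F_2)=\sD^1_{X_1}F_1\ot F_2+F_1\ot\sD^2_{X_2}F_2$. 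There is no single deep obstacle here -- the argument is essentially the careful assembly of the observations above -- and the point demanding the most attention is the bookkeeping of the middle paragraph: that the fibrewise description of sections upgrades to a genuine isomorphism of line bundles, and that the weight of the induced $S^1$-action comes out as $\zr$ (not $\zr/2$), which is precisely why the coefficient $\half$ must appear in the descended Reeb field.
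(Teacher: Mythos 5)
Your proposal is correct and follows essentially the same route as the paper: the theorem is stated there as a summary of the preceding construction (the simple $1$-form $\zh_1\op\zh_2$ with characteristic distribution $\la\cR_1-\cR_2\ran$, its regular reduction, the Reeb field as the descent of an affine combination of $\cR_1,\cR_2$, and the identification of sections of $\L_1\ot\L_2$ with products of equivariant functions constant along the orbits of $\cR_1-\cR_2$). You merely fill in the details the paper leaves as ``easy to see'' (the quotient in stages, the period bookkeeping forcing the factor $\half$, and the explicit verification of the product metric and connection), all of which check out.
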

\no Of course, if $(M,\zh)$ is a principal contact manifold with the minimal period $\zr\in(0,+\infty]$ and $c\in\Rt$, then $(M,c\zh)$ is also a principal contact manifold with the period $c\zr$ for any $c\in\R_+$. Of course, this does not change the Chern class, $\zw/\zr=c\zw/c\zr$.
Hence, we can consider the product of $(M_1,\zh_1)$ and $(M_2,c\zh_2)$ such that $\frac{\zr_1}{c\zr_2}=1$.

\mn There are some obvious generalizations of the above procedure of constructing the product of principal contact manifolds. First, we do not need to assume that the minimal periods $\zr_1$ and $\zr_2$ of the Reeb flows are the same. Of course, due to the integrality conditions,
$$[\zw_1\op\zw_2]\in H^2(N_1\ti N_2,\zr_1\cdot\Z+\zr_2\cdot\Z)$$
need not be integral in general, as $\zr_1\cdot\Z+\zr_2\cdot\Z$ can be a dense subgroup of $\R$, and we are forced to assume that they are commensurate.

The product of fibers of the principal bundles $\zt_j:M_j\to N_j$, $i=1,2$ is the two-dimensional torus $\T^2=S^1\ti S^1$, and the vector field of $\cR_1-\cR_2$ on $\T^2=\R^2/\Z^2$ is represented on $\R^2$, with coordinates $(x,y)$, by the vector field $X=\za\pa_x-\zb\pa_y$, where $\za=2\pi/\zr_1$ and $\zb=2\pi/\zr_2$.
It is well known that the trajectories of $X$ projected onto $\T^2$ form a simple foliation if and only if $\za/\zb$ is a rational number, say $k/l$, where $k,l\in\Z$ are relatively prime, $(k,l)=1$. In this case, we can perform the contact reduction as above getting a contact manifold $(M,\zh)$. The vector field $a\cR_1+b\cR_2$ is represented on $\R^2$ by $Y=a\za\pa_x+b\zb\pa_y$ and projects onto the Reeb vector field  $\cR$ of the reduced contact form $\zh$ if and only if $a+b=1$.
\begin{lemma} The Reeb vector field $\cR$ is periodic with the minimal period
$$\zr=\frac{\zr_2}{k}=\frac{\zr_1}{l}.$$
\end{lemma}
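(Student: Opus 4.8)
The plan is to determine the subgroup $\{\,t\in\R:\exp(t\cR)=\id_M\,\}\subset\R$ explicitly and read off its least positive element. Recall the set-up of the construction: $M=(M_1\ti M_2)/\langle\cR_1-\cR_2\rangle$ (the reduction being by the periodic $\sU(1)$-action generated by $\cR_1-\cR_2$, whose flow is $\tau\mapsto\big(\exp(\tau\cR_1),\exp(-\tau\cR_2)\big)$), and $\cR$ is the image, under the quotient map $q\colon M_1\ti M_2\to M$, of $Y=a\cR_1+b\cR_2$ with $a+b=1$. Since $\cR_1$ and $\cR_2$ live on different factors, $[\,\cR_1,\cR_2\,]=0$, so $Y$ commutes with $\cR_1-\cR_2$ and descends, with $\exp(t\cR)\big([y_1,y_2]\big)=\big[\exp(ta\cR_1)(y_1),\,\exp(tb\cR_2)(y_2)\big]$ for all $[y_1,y_2]\in M$.

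First I would translate the equation $\exp(t\cR)=\id_M$ into an arithmetic condition on $t$. Fix any point $[y_1,y_2]\in M$. Then $\exp(t\cR)\big([y_1,y_2]\big)=[y_1,y_2]$ precisely when $\big(\exp(ta\cR_1)(y_1),\exp(tb\cR_2)(y_2)\big)$ lies on the $\langle\cR_1-\cR_2\rangle$-orbit of $(y_1,y_2)$, i.e.\ when there is $\tau\in\R$ with $\exp\big((ta-\tau)\cR_1\big)(y_1)=y_1$ and $\exp\big((tb+\tau)\cR_2\big)(y_2)=y_2$. Because $(M_j,\zh_j)$ is a principal contactification, $\cR_j$ generates a \emph{free} $\sU(1)$-action with minimal period $\zr_j$; freeness converts the two fixed-point relations into $ta-\tau\in\zr_1\Z$ and $tb+\tau\in\zr_2\Z$, and adding them (using $a+b=1$) gives $t\in\zr_1\Z+\zr_2\Z$. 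Conversely, if $t=p\zr_1+q\zr_2$ with $p,q\in\Z$, then the choice $\tau:=ta-p\zr_1$ makes $tb+\tau=t-p\zr_1=q\zr_2$, so both relations hold simultaneously at \emph{every} point, whence $\exp(t\cR)=\id_M$. Therefore $\{\,t\in\R:\exp(t\cR)=\id_M\,\}=\zr_1\Z+\zr_2\Z$.

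It remains to evaluate this subgroup. Writing $\zr_2/\zr_1=k/l$ with $(k,l)=1$ and setting $\zr:=\zr_1/l=\zr_2/k$, we have $\zr_1=l\zr$ and $\zr_2=k\zr$, so $\zr_1\Z+\zr_2\Z=\zr\,(l\Z+k\Z)=\zr\,\Z$ by B\'ezout's identity. Hence $\cR$ is periodic with minimal period the positive generator $\zr=\zr_2/k=\zr_1/l$, as claimed. (Carrying out the same computation on a single fibre torus $\T^2=\R^2/\Z^2$, on which $\cR_1-\cR_2$ and $Y$ become the constant vector fields $(\za,-\zb)$ and $(a\za,b\zb)$, reproduces the statement and exhibits the geometry directly.) The only step needing care is the equivalence in the second paragraph: one must check that the auxiliary parameter $\tau$ can be chosen independently of the base point --- it can, since the resulting constraint $t\in\zr_1\Z+\zr_2\Z$ no longer refers to the point --- and one must invoke freeness of the two circle actions to pass from ``$\exp(u\cR_j)$ fixes a point'' to ``$u\in\zr_j\Z$''; everything else reduces to B\'ezout.
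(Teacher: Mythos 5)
Your proof is correct, but it takes a genuinely different route from the paper's. The paper argues on a single fibre torus $\T^2=\R^2/\Z^2$: it represents $\cR_1-\cR_2$ and $Y=a\cR_1+b\cR_2$ by constant vector fields on the covering $\R^2$, and then computes the first positive time at which the trajectory of $Y$ meets the family of parallel lines forming the (lifted) orbit of $\cR_1-\cR_2$ through the origin; B\'ezout's identity enters in locating the line closest to the origin, giving $x_0=1/k$ and hence $t=\zr_2/k$. You instead compute the full isotropy subgroup $\{t\in\R:\exp(t\cR)=\id_M\}$ directly from the group actions: the condition that $\exp(tY)(y_1,y_2)$ lie on the $(\cR_1-\cR_2)$-orbit of $(y_1,y_2)$ unwinds, via freeness of the two circle actions, to $ta-\tau\in\zr_1\Z$ and $tb+\tau\in\zr_2\Z$ for some $\tau$, which you correctly show (using $a+b=1$ and the explicit converse choice of $\tau$) is equivalent to $t\in\zr_1\Z+\zr_2\Z=\zr\,(l\Z+k\Z)=\zr\Z$. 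Both arguments reduce to the same B\'ezout computation, but yours buys two things: it identifies the entire period group at once rather than only its least positive element, and it makes explicit that the answer is independent of the base point and of the choice of $a,b$ with $a+b=1$ --- a point the paper's fibrewise computation leaves implicit. The one hypothesis you must (and do) invoke that the paper uses only tacitly is that the $\sU(1)$-actions generated by $\cR_1$ and $\cR_2$ are free, so that ``$\exp(u\cR_j)$ fixes a point'' is equivalent to ``$u\in\zr_j\Z$''; this is guaranteed by the standing assumption that the $(M_j,\zh_j)$ are principal contactifications. No gaps.
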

\begin{proof} We are looking for the minimal $t>0$ such that the curve $\zg(t)=\big(a\za t,b\zb t\big)$ in $\R^2$ intersects the set
$$A=\{(s\za,-s\zb)+\Z^2\,\big|\, s\in\R\}\subset\R^2.$$
This set consists of parallel lines, and the one closest to the origin among those not going through the origin is the line $y=x_0-\frac{l}{k}x$ with the smallest possible $x_0>0$, which exists due to the periodicity of $X$. There are integers $m,n$ and $s\in\Rt$, such that
$$(x, x_0-lx/k)=(m,n),$$ so $x_0-lm/k=n$. In other words, we are looking for the smallest $x_0>0$ such that
$kx_0=kn+lm$ for some integers $m,n$. But $k,l$ are relatively prime, and by the \emph{B\'ezout's identity} the minimal positive value of $kn+lm$ with integer $m,n$ is $1$, so the minimal $x_0$ is $x_0=1/k$. Now, we just have to find $t>0$ such that $(a\za t,b\zb t)$ is the point on the line $y=1/k-lx/k$. We get $t=1/(k\zb)=\zr_2/k$, which is the minimal period of $\cR$.

\end{proof}
\no If, in turn, one of the principal contact structures is as in Example \ref{ex2}, $M_1=N_1\ti\R$ with the contact form $\zh_1(y,t)=\zvy(y)+\xd t$, then the situation is even simpler. The Reeb vector field $\cR_1$ on $M_1$ is $\pa_t$ with the period $+\infty$, and the vector field $X=\cR_1-\cR_2$ on $M_1\ti M_2$ defines automatically a principal $\R$-action, so $M_1\uti M_2=N_1\ti M_2$ and $\zh=\zvy\op\zh_2$, with the Reeb vector field $\cR=\cR_2$, whose period is $\zr_2$.
Summing up our observations, we get the following.
\begin{theorem}
Let $(M_j,\zh_j)$ be a principal contactification of a symplectic manifold $(N_j,\zw_j)$, with the Reeb vector field $\cR_j$ with the minimal period $\zr_j\in(0,+\infty]$, $i=1,2$. Suppose that $\zr_1=+\infty$ or both $\zr_j$ are finite with
$$\frac{\zr_2}{\zr_1}=\frac{k}{l},$$
where $k,l$ are positive, relatively prime integers. Then, the action of flow $\exp\big(t(\cR_1-\cR_2)\big)$ on $M_1\ti M_2$ is free and proper, the 1-form $\zh_1\op\zh_2$ is $(\cR_1-\cR_2)$-invariant, and the quotient manifold $M_1\uti M_2$ equipped with the reduced 1-form $\zh_1\uti\zh_2$
is a principal contactification of the symplectic manifold $(N_1\ti N_2,\zw_1\op\zw_2)$ with the minimal period $\zr=+\infty$ if $\zr_1=\infty$, and
$$\zr=\frac{\zr_2}{k}=\frac{\zr_1}{l}$$
in the other case. The corresponding Reeb vector field is the projection of the vector field $(\cR_1+\cR_2)/2$ on $M_1\uti M_2$.
\end{theorem}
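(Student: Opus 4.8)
\emph{Overview.} The statement is essentially a summing-up of the preceding discussion and Lemma: most of the work is done, and it remains to assemble the pieces, to check that the quotient is a genuine smooth manifold, and to transport everything along the quotient map. I use the same notation; write $\zh=\zh_1\op\zh_2$, and denote by $\cR_1,\cR_2$ also the vector fields on $M_1\ti M_2$ that equal the Reeb fields on the respective factors and vanish on the other. Since $i_{\cR_j}\zh_j=1$, $\Ll_{\cR_j}\zh_j=0$, and $\cR_1,\cR_2$ live on different factors, one gets immediately $i_{\cR_1-\cR_2}\zh=1-1=0$, $\Ll_{\cR_1-\cR_2}\zh=0$ (so $\zh_1\op\zh_2$ is $(\cR_1-\cR_2)$-invariant), and hence $i_{\cR_1-\cR_2}\xd\zh=0$ by Cartan's formula. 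Because $\ker(\xd\zh_j)=\la\cR_j\ran$, the kernel of the closed $2$-form $\xd\zh$ is the $2$-dimensional distribution $\la\cR_1,\cR_2\ran$, so $\cR_1-\cR_2$ spans the characteristic distribution $\ker\zh\cap\ker\xd\zh$, and $\zh_1\op\zh_2$ is \emph{simple} in the sense of \cite[Definition 2.5]{Grabowska:2024} precisely when its orbit foliation is simple.

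\emph{The group action (the crucial point).} The flow is $\exp\big(t(\cR_1-\cR_2)\big)=\big(\exp(t\cR_1),\exp(-t\cR_2)\big)$. Since each $p_j:M_j\to N_j$ is a principal bundle, $\exp(t\cR_j)$ fixes a point of $M_j$ only for $t\in\zr_j\Z$ (read $\zr_j\Z=\{0\}$ when $\zr_j=\infty$); hence the stabiliser of every point of $M_1\ti M_2$ is $\zr_1\Z\cap\zr_2\Z$. If $\zr_1=\infty$ (or $\zr_2=\infty$) this set is $\{0\}$, so the flow defines a \emph{free} $\R$-action, which is proper because the $\cR_1$- (resp. $\cR_2$-) action on $M_1$ (resp. $M_2$) is proper and the projection onto that factor intertwines the two actions. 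If $\zr_1,\zr_2$ are finite, write $\zr_1=l\zr_0$, $\zr_2=k\zr_0$ with $\zr_0=\zr_1/l=\zr_2/k$ and $(k,l)=1$; then $\zr_1\Z\cap\zr_2\Z=kl\zr_0\Z$, the commensurability of the periods is exactly what makes the one-parameter subgroup of the fibre torus $\sU(1)\ti\sU(1)$ generated by $\cR_1-\cR_2$ \emph{closed}, and the flow factors through a \emph{free} (hence automatically proper) action of the circle $\R/kl\zr_0\Z$. In every case the orbit foliation is simple, so $M:=M_1\uti M_2=(M_1\ti M_2)/\la\cR_1-\cR_2\ran$ is a smooth manifold and $q:M_1\ti M_2\to M$ is a surjective submersion.

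\emph{Reduced contact form and contactification.} By the first paragraph $\zh_1\op\zh_2$ is $q$-basic, so it descends to a unique $1$-form $\zh_1\uti\zh_2$ on $M$ with $q^*(\zh_1\uti\zh_2)=\zh_1\op\zh_2$, and this reduced form is contact by the regular reduction of \cite[Theorem 2.6]{Grabowska:2024} (equivalently: $\xd(\zh_1\uti\zh_2)$ pulls back to $\xd(\zh_1\op\zh_2)$, whose $2$-dimensional kernel $\la\cR_1,\cR_2\ran$ projects onto the $1$-dimensional $\ker\xd(\zh_1\uti\zh_2)$ on which $\zh_1\uti\zh_2$ does not vanish). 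The remaining claims follow by pulling back along $q$, which is injective on forms. Since $(\cR_1+\cR_2)/2$ commutes with $\cR_1-\cR_2$ it projects to a field $\cR$ on $M$, and $q^*\big(i_\cR(\zh_1\uti\zh_2)\big)=i_{(\cR_1+\cR_2)/2}(\zh_1\op\zh_2)=1$, $q^*\big(i_\cR\xd(\zh_1\uti\zh_2)\big)=i_{(\cR_1+\cR_2)/2}\xd(\zh_1\op\zh_2)=0$, so $\cR$ is the Reeb field of $\zh_1\uti\zh_2$. As $\cR_1-\cR_2$ is $(p_1\ti p_2)$-vertical, $p_1\ti p_2$ factors as $p\circ q$ for a unique submersion $p:M\to N_1\ti N_2$; each fibre $p^{-1}(n_1,n_2)=q\big(p_1^{-1}(n_1)\ti p_2^{-1}(n_2)\big)$ is a single orbit both of the residual structure group and of $\cR$ (because $(1,1)$ and $(1,-1)$ span the Lie algebra of the fibre group), so $M/\cR=N_1\ti N_2$, and the induced $\cR$-action is free by the stabiliser computation above. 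Finally $q^*\xd(\zh_1\uti\zh_2)=\xd\zh_1\op\xd\zh_2=p_1^*\zw_1\op p_2^*\zw_2=(p_1\ti p_2)^*(\zw_1\op\zw_2)=q^*\,p^*(\zw_1\op\zw_2)$, so $\xd(\zh_1\uti\zh_2)=p^*(\zw_1\op\zw_2)$; thus $(M,\zh_1\uti\zh_2)$ is a principal contactification of $(N_1\ti N_2,\zw_1\op\zw_2)$.

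\emph{Period, and where the difficulty lies.} The minimal period of $\cR$ is then exactly the preceding Lemma in the finite case, $\zr=\zr_2/k=\zr_1/l$, while for $\zr_1=\infty$ it follows from the explicit identification $M_1\uti M_2=N_1\ti M_2$, $\cR=\cR_2$, discussed just before the theorem. The only points requiring genuine care are the freeness and properness of the action in the second paragraph — in particular the closedness of the one-parameter subgroup of $\sU(1)\ti\sU(1)$ it generates, which is precisely where commensurability of $\zr_1,\zr_2$ is indispensable (an irrational ratio would make the orbit foliation non-simple and the quotient non-Hausdorff); everything else is bookkeeping with pullbacks along the submersion $q$.
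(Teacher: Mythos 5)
Your proof is correct and takes essentially the same route as the paper, which states this theorem as a summary of the preceding discussion together with the Lemma on the minimal period; you merely make explicit the steps the paper leaves implicit (invariance and horizontality of $\zh_1\op\zh_2$, the stabiliser $\zr_1\Z\cap\zr_2\Z=kl\zr_0\Z$ and the resulting free circle action, properness, and the descent of the forms along $q$). Two points deserve attention. First, when you assert that the induced $\cR$-action on $M_1\uti M_2$ is free ``by the stabiliser computation above'', the relevant computation is in fact a different one: the stabiliser of $q(y_1,y_2)$ under $\exp(t\cR)$ is $\zr_1\Z+\zr_2\Z=\zr_0\Z$, not $\zr_1\Z\cap\zr_2\Z$; writing this line out both gives freeness and independently re-derives the Lemma's value $\zr=\zr_0=\zr_1/l=\zr_2/k$ of the minimal period. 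Second, in the case $\zr_1=+\infty$ with $\zr_2$ finite, your identification $M_1\uti M_2\simeq N_1\ti M_2$ with $\cR=\cR_2$ (which coincides with the paper's own discussion preceding the theorem) yields minimal period $\zr_2$, not the value $+\infty$ asserted in the statement; this is an inconsistency in the statement itself rather than in your argument, but you should point it out explicitly instead of claiming that the stated period ``follows'' from the identification.
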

\begin{remark}
Let us note that there are some similar constructions of products proposed for quasi-regular Sasakian manifolds in \cite{Boyer:2007} (see also \cite{Wang:1990}).
\end{remark}

%%%%%%%%%%%%%%%%%%%%%%%%%%%%%%%%
\section{Conclusions and outlook}
The idea of this paper came from our interests in contact supergeometry \cite{Bruce:2017,Grabowski:2013}, contact mechanics \cite{Grabowska:2022,Grabowska:2023}, and geometry of quantum states \cite{Grabowski:2005}. We wanted to have an extension of the celebrated Boothby-Wang theorem \cite{Boothby:1958} to the non-compact case, which is difficult to find in the literature in an explicit form; even corrected versions of the Boothby-Wang proof in \cite{Geiges:2008,Niederkruger:2005} work exclusively for compact manifolds. The compactness assumption appears to be irrelevant, and we provided a very simple proof. Moreover, the fact that regular contact manifolds with compact orbits of the Reeb vector field are canonically principal bundles extends to the case of the complete Reeb vector field, which is satisfied automatically on compact manifolds. Although regular contact manifolds can contain simultaneously compact and non-compact orbits of the Reeb vector field, we proved it is impossible in the case of complete Reeb vector fields. Consequently, regular contact manifolds $(M,\zh)$ with complete Reeb vector fields are automatically principal bundles with the structure group either $\sU(1)$ or $\R$ in which $\zh$ defines a principal connection. In both cases, the curvature $\zw$ of this connection is a symplectic form on the base manifold $N=M/\cR$ and satisfies an integrality condition. Due to a canonical one-to-one correspondence between principal $\sU(1)$-bundles and complex Hermitian line bundles over $N$, we get a direct connection to the geometric quantization of symplectic manifolds and quantum physics.

Being interested in products of quantizations, we studied products in the category of contact manifolds and introduced the concept of a product also for regular contact manifolds with complete vector fields, which can be translated to an idea of products of prequantum bundles. This raises interesting questions concerning the geometric quantization for composite systems, which should include the corresponding polarizations into the picture and need much more complex geometry. We postpone these problems to a separate paper.

%%%%%%%%%%%%%%%%%%%%%%%%%%%%%%%%%%%%
\section{Acknowledgements}
The authors thank Alan Weinstein for his clarifications concerning the Weinstein Conjecture.

\vskip.5cm
\noindent Katarzyna Grabowska\\\emph{Faculty of Physics,
University of Warsaw,}\\
{\small ul. Pasteura 5, 02-093 Warszawa, Poland} \\{\tt konieczn@fuw.edu.pl}\\
https://orcid.org/0000-0003-2805-1849\\

\noindent Janusz Grabowski\\\emph{Institute of Mathematics, Polish Academy of Sciences}\\{\small ul. \'Sniadeckich 8, 00-656 Warszawa,
Poland}\\{\tt jagrab@impan.pl}\\  https://orcid.org/0000-0001-8715-2370

\end{document}